\documentclass{article}
\usepackage{graphicx} 
\usepackage{amsmath,array,amsfonts,titlesec,amsthm,graphics,setspace,caption,epsf,cite}
\usepackage{algorithmic,bibcheck,pdfpages}
\usepackage[ruled,vlined]{algorithm2e}
\usepackage{url, epstopdf, amscd, amssymb, color,fancyhdr,nomencl,colortbl}
\usepackage[colorlinks]{hyperref}
\usepackage{longtable,latexsym,epsfig,caption,subcaption,microtype,textcomp,stmaryrd,listings}
\hypersetup{linkcolor=red,citecolor=blue}
\usepackage{nomencl, array, amsfonts, setspace, epsf}

\usepackage{float}

\usepackage{amssymb}

\usepackage{cancel}

\usepackage{graphicx,wrapfig,lipsum}

\usepackage{multicol}
\numberwithin{equation}{section}
\newtheorem{thm}{Theorem}[section]
 
 \newtheorem{lem}[thm]{Lemma}
 \newtheorem{cor}[thm]{Corollary}
 \newtheorem{exm}{Example}[section]
 \newtheorem{rem}{Remark}[section]
 \newtheorem{dfn}{Definition}[section]

\usepackage[
  a4paper,
  left=2.5cm,
  right=2.5cm,
  top=2.5cm,
  bottom=2.5cm
]{geometry}

\newcounter{algorithmbis}
\renewcommand{\thealgorithmbis}{\thesection.\arabic{algorithmbis}}
\def\algorithmbis{\@ifnextchar[{\@algorithmbisa}{\@algorithmbisb}}
\def\@algorithmbisa[#1]{%
  \refstepcounter{algorithmbis}
  \trivlist
  \leftmargin\z@
  \itemindent\z@
  \labelsep\z@
  \item[\parbox{\textwidth}{%
    \hrule
    \hrule
    \noindent\strut\textbf{Algorithm \thealgorithmbis} #1
    \hrule
  }]\hfil\vskip0em%
}
\def\@algorithmbisb{\@algorithmbisa[]}

\makeatother

\def\enddemo{\qed \endtrivlist}
\expandafter\let\csname enddemo*\endcsname=\enddemo

\def\qedsymbol{\ifmmode\bgroup\else$\bgroup\aftergroup$\fi
  \vcenter{\hrule\hbox{\vrule
height.6em\kern.6em\vrule}\hrule}\egroup}
\def\qed{\ifmmode\else\unskip\nobreak\fi\quad\qedsymbol}

\SetKwProg{Fn}{Function}{}{}
\newcounter{slika}[section]

\newcommand{\diag}{\mathrm{diag}}

\newcommand{\N}{\mathbb{N}}

\renewcommand{\O}{\mathcal{O}}

\newcommand{\tr}{\textup{tr}}
\newcommand{\R}{\mathbb{R}}

\newcommand{\seqn}[1]{\left({#1}\right)_{k=0}^\infty}

\newcommand{\bb}{\begin{equation}}
\newcommand{\ee}{\end{equation}}

\numberwithin{equation}{section}

\begin{document}

\title{\bf Algorithm with variable coefficients for computing matrix inverses}


\author{Marko V. Kostadinov$^1$, Mihailo Krsti\' c$^2$, Kostadin Rajkovi\' c$^3$, Marko D. Petkovi\' c$^4$,\\ \\
$^1$\footnotesize{University of Ni\v s, Faculty of Mechanical Engineering, Aleksandra Medvedeva 14, Ni\v{s}, 18 000, Serbia} \\
\footnotesize{\small \texttt{E-mail: marko.kostadinov@masfak.ni.ac.rs}}
\\
$^2$\footnotesize{University of Belgrade, Faculty of Mathematics, Studentski Trg 16, Belgrade, 11 000, Serbia}\\
\footnotesize{\small \texttt{E-mail: mihailo.krstic@matf.bg.ac.rs}}
\\
$^3$\footnotesize{University of Ni\v s, Faculty of Sciences and Mathematics, Vi\v segradska 33, Ni\v{s}, 18 000,Serbia}\\
\footnotesize{\small \texttt{E-mail: kosta.rajkovic@gmail.com}}
\\
$^4$\footnotesize{University of Ni\v s, Faculty of Sciences and Mathematics, Vi\v segradska 33, Ni\v{s}, 18 000,Serbia}\\
\footnotesize{\small \texttt{E-mail: dexterofnis@gmail.com}} 
\\
}

\date{\today}
\renewcommand{\theequation}{\thesection.\arabic{equation}}
\maketitle
\begin{abstract}
We present a new approach for the construction of the efficient generalized Schultz
iterative methods with variable coefficients, for computing the inverse matrix. The proposed method has the form 
$$
X_{k+1} = X_k((\alpha_k+\beta_k) I+\beta_k F_k),\quad F_k=I-AX_k,
$$
where $A \in \R^{n\times n}$ is real matrix, $X_0\in \R^{n\times n}$ is initial approximation and $\alpha_k, \beta_k \in \R$ are the coefficients determined in each iteration (i.\,e. for every $k\in\{0,1,2,\ldots\}$). They are chosen so that the residual matrix of the next iteration $F_{k+1}=I-AX_{k+1}$ has the minimal Frobenius norm. Convergence and robustness analysis are performed theoretically and the necessary and/or sufficient conditions for the convergence are derived. The new method is compared with existing ones in the series of numerical experiments. The results obtained support the theoretical conclusions on convergence and show the advantage of the new method in terms of computational efficiency. Moreover, it shows that the new method converges for much wider class of initial approximations $X_0\in \R^{n\times n}$. \\

\medskip

\noindent \textbf{Keywords}: Matrix inverse, Iterative methods, Generalized Schultz methods, Dynamic coefficients, Quadratic optimization
\medskip

\noindent \textbf{MSC}: 65-XX, 65Fxx, 15A18

\medskip

\noindent \textbf{Corresponding author}: Marko D. Petkovi\'c

\end{abstract}
\medskip

\section{Introduction and preliminaries}

\qquad The problem of computing the inverse matrix $A^{-1}$ for a given invertible real matrix $A$ has played a central role in the development of numerical linear algebra. 
Since the early days of matrix theory, researchers have sought efficient and numerically stable procedures for determining matrix inverses, particularly for large-scale problems arising in scientific computing, engineering, and applied mathematics.

\medskip

The vector space of all real matrices of dimension $n\times m$ is denoted by $\R^{n\times m}$. For a matrix $Q\in\R^{n\times m}$, we denote by $\mathcal{R}(Q)$ its image (range) and by $\mathcal{N}(Q)$ its kernel (null space). In the special case when $m=n$, the symbols $\tr(Q)$ and $\sigma(Q)$ represent the trace and the spectrum of the matrix $Q$, respectively. The spectral radius of $Q$ is defined by
$
r(Q) = \max\{ |\lambda|\, :\, \lambda \in \sigma(Q)\}.$
A commonly used norm on the vector space $\R^{n\times m}$ is the \textit{Frobenius norm} $\|\cdot\|_F$, which is given by
\begin{equation}\label{Fro}
\|Q\|_F=\sqrt{\sum_{i=1}^n\sum_{j=1}^m q_{ij}^2},\qquad Q=[q_{ij}]\in\R^{n\times m}.
\end{equation}
This norm is particularly convenient in numerical linear algebra applications because it can be computed directly from \eqref{Fro} with computational complexity $\O(nm)$. Such efficiency is not typical for many other matrix norms, including the spectral norm $\|\cdot\|_2$, which is defined only for square matrices.

\medskip

A particularly influential idea for the matrix inversion, was the use of the Neumann series expansion. This approach inspired a variety of iterative methods, including the well-known Schultz (or Newton) iteration introduced in the mid-twentieth century. 
Later developments generalized this idea into the broader class of \textit{Hyper-Power methods}, which achieve higher orders of convergence by incorporating higher-degree polynomial corrections. The pioneering work on this topic was \cite{Sch}.

\medskip

These approaches rely on truncating the infinite series expansion
$
(I-M)^{-1}=I+M+M^2+M^3+\cdots$
which is valid for any invertible matrix $M\in \R^{n\times n}$ whose spectral radius satisfies $r(M)<1$. From this expansion, one derives the well known Hyper-Power iterative scheme
\begin{equation}\label{HP}
X_{k+1}=X_k(I+F_k+\cdots+F_k^{d}),\qquad k\in\N_0
\end{equation}
for any integer $d>1$, where $F_k=I-AX_k$ denotes the residual matrix. The method converges provided that the initial matrix $X_0\in \R^{n\times n}$ satisfies $r(F_0)<1$, and its order of convergence equals $d+1$. The usual choice for $X_0$ is $X_0=\frac{2}{\|A\|_F^2} \cdot A^T$ which clearly satisfies $r(F_0)<1$.
A special case of \eqref{HP} is obtained by choosing $d=1$. This yields the classical \textit{Schultz (Newton) iteration} (see \cite{Sch})
\begin{equation} \label{Sch}
    X_{k+1}=X_k(I+F_k)=X_k(2I-AX_k),\qquad k\in\N_0, 
\end{equation}
which will be referred to as \textbf{HP2}. Furthermore, the Hyper-Power method corresponding to $d=2$ is 
\begin{equation} \label{eq:HP3}
    X_{k+1}=X_k(1+F_k+F_k^2),\qquad k\in\N_0, 
\end{equation}
and denoted by \textbf{HP3}.

\medskip

Further research expanded these techniques to compute not only classical matrix inverses but also generalized inverses such as the Moore--Penrose and Drazin inverses, which are essential in the analysis of singular or rectangular matrices. 
It has been demonstrated that the Schultz iteration \eqref{Sch} can also be used for computing various generalized inverses of matrices that may be singular or rectangular (see, for instance, \cite{SMSMP, SMSDrazin}). Related results for the general Hyper-Power method \eqref{HP} are presented in \cite{ChenWan,Weiguo,Liu}.

\medskip

A broader family of methods, known as \textit{Generalized Schultz iterative methods}, have the form $X_{k+1}=X_kP(AX_k)$, where $P\in \R[x]$ is a real polynomial of degree $d$. This family is introduced and analyzed by Petkovi\'c in \cite{MP1}, and contains all previously mentioned iterative methods, for the special choices of the polynomial $P$. Iterations can be written in the equivalent form 
$
X_{k+1} = X_k \bar P(F_k)$,
where $\bar P(x)=P(1-x)$ is also $d$-degree real polynomial and $F_k=I-AX_k$ is the matrix residual. Theoretical properties, including the convergence order, sufficient conditions for convergence based on the initial matrix $X_0$ as well the behaviour when matrix $A$ is singular, are extensively studied in \cite{MP1}. 

\medskip

Numerous variants of the Hyper-Power method and other Generalized Schultz iterative methods, have been proposed by selecting different polynomial coefficients and computation procedure, in order to improve the convergence properties and computational efficiency of the resulting iterative schemes (see for example \cite{PKR}).
The Hyper-Power method \eqref{HP} is obtained by choosing $\bar P(x)=1+x+\cdots+x^d$. Its order of convergence is equal to $d+1$, while each iteration requires $d+1$ matrix--matrix multiplications (one multiplication to compute $AX_k$, $d-1$ multiplications to evaluate the powers $F_k^2,\ldots,F_k^d$ and one final multiplication by $X_k$). Consequently, increasing $d$ simultaneously increases both the convergence order and the computational cost of each iteration. In practice, the choice $d=2$ is generally regarded as the most favorable compromise between these two competing requirements, and the corresponding \textbf{HP3} method has become one of the most widely used members of the Hyper-Power family. 
A representative example of the highly efficient modification of the Hyper-Power method for $d=8$, is \textbf{IHP9} method from \cite{MP2} which has convergence order $9$ with only 5 matrix multiplications per iteration.

\medskip

\medskip
The research on iterative methods for computing inverse matrix and generalized inverses is still very active research topic. Some recent papers are \cite{Aceto2025, BB, Art, Cordero2021, Cordero2024, Seven, XiaLi,ZhangKangTan, Kaur2020, effgen, Martinez, GIBS}. In \cite{BB}, the authors studied a variation of the Schultz iterative method with an additional focus on ill-posed problems and obtaining stable solutions to them. 
In \cite{Cordero2021} parametric families iterative methods for the computation of generalized inverses of arbitrary order of convergence are studied with an additional analysis of the choice of initial approximation. 
A further research on this topic is done in \cite{Cordero2024} with the novel inclusion of a dynamical analysis of the methods. An iterative method for finding the generalized Drazin inverse of a matrix of order seven of order seven was studied in \cite{Seven}. In \cite{Kaur2020} a novel hyper-power iterative method for the calculation of the weighted Moore-Penrose inverse was given with the main objective of minimizing the computational complexity. A similar objective can be seen in \cite{effgen}, where an iterative method of order nine was constructed to compute the generalized outer inverse of a matrix using only $7$ matrix multiplications. An accuracy-focused method for computing the Moore-Penrose inverse was studied in \cite{Martinez}. In \cite{GIBS} the Schultz iterative method has been generalized to sparse matrices. In \cite{XiaLi} the authors introduced a modified BAS (block-diagonal and anti-block-diagonal splitting) method for solving the absolute value matrix equation. In \cite{ZhangKangTan}, a two-parameter iterative method for solving the algebraic Riccati equation in the non-symmetric case was studied. In \cite{Aceto2025} an iterative method for computing the inverse of $\varphi$-functions $\psi_l(A)=\left(\varphi_l(A)\right)^{-1}$ was introduced in the special case when $A$ is a large sparse matrix whose eigenvalues lie in the open left half-plane. It is important to mention that novel iterative methods for the computation of inverse matrix and generalized inverses have found practical applications such as MIMO detection systems, see \cite{MIMO,Yang2023}, data influence estimation, see \cite{Zhou2025}, high performance computing \cite{Janna2025}, etc.

\medskip
A fundamentally different approach was introduced for the first time in \cite{OPM}, where the coefficients of the polynomial $\bar P(x)$ are no longer fixed throughout the iterative process, but are allowed to vary from one iteration to the next. More precisely, at each iteration a new polynomial is constructed by selecting its coefficients according to the current iterate. However, the method proposed in \cite{OPM} is restricted to first-degree polynomials, so that only a single coefficient is determined adaptively at each iteration.
More concretly, in \cite{OPM}, authors analysed the following variant of the Schultz method, denoted by \textbf{OPM} method:
\[
X_{k+1}=X_k\bigl(I+(\alpha_k-1)F_k\bigr),\qquad F_k=I-AX_k,\qquad k\in\N_0,
\]
with variable coefficient $ \alpha_k$. 
It is computed by the elements of $F_k$ and $G_k=F_k^2$ using total $\O(n^2)$ operations ($n$ is the dimension of matrix $A$). 
The analysis in \cite{OPM} show that the \textbf{OPM} method outperforms the classical Schultz iterative method, in terms of both total computation time and total number of matrix multiplications.

\medskip


\medskip
In this paper, we extend the idea from \cite{OPM} and derive an iterative method for computing matrix inverse $A^{-1}$ with two variable coefficients. The paper is organized as follows. In Section~\ref{construction}, a new iterative method for matrix inversion is introduced and formulated in the form of Algorithm~\ref{optcoefit} and Algorithm~\ref{algor0}. 
The convergence analysis and error bounds are established in Section~\ref{convergence}. Numerical experiments are given in Section \ref{sect:testing} and demonstrate that the proposed method outperforms existing schemes in terms of both computational speed and accuracy. Finally, concluding remarks are given in Section~\ref{sect:conclusion}.

\section{Construction of the algorithm}
\label{construction}

Let $A\in \R^{n\times n}$ is invertible matrix. Consider the following iterative method
\begin{equation}
X_{k+1}=X_k\left(a_0^{(k)}I+a_1^{(k)}AX_k\right),\qquad k\in\N_0,
\label{eq:Xkak}
\end{equation}
where $X_0 \in \R^{n\times n}$ is given initial matrix, and the coefficients $a_0^{(k)}$ and $a_1^{(k)}$ are being computed in each iteration.

\medskip

Denote by
$
 F_k=I-AX_k
$
the residual matrix in $k$-th iteration of the iterative process. 
The goal is to compute the coefficients $a_0^{(k)}$ and $a_1^{(k)}$ such that the Frobenius norm of $F_{k+1}$ is minimized. 
By direct computation, one obtains:
\begin{align*}\label{Smena2}\|F_{k+1}\|_F^2 &=\|I-AX_{k+1}\|_F^2=\left\|I-AX_k\left(a_0^{(k)}I+a_1^{(k)}F_k\right)\right\|_F^2 \\ &=
\left\|I-(I-F_k)\left(a_0^{(k)}I+a_1^{(k)}F_k\right)\right\|_F^2 =
\left\|\left(1-a_0^{(k)}\right)I+\left(a_0^{(k)}-a_1^{(k)}\right)F_k+a_1^{(k)}F_k^2\right\|_F^2.
\end{align*}
Now introduce the new variables $\alpha_k$ and $\beta_k$ as 
$$
\alpha_k:=a_0^{(k)}-a_1^{(k)},\qquad \beta_k:=a_1^{(k)}
$$
for $k\in \N_0$.
The residual matrix $F_{k+1}$ is now given by
\begin{equation}\label{smena2}
  F_{k+1}=(1-(\alpha_k+\beta_k))I+\alpha_kF_k+\beta_kF_k^2  
\end{equation}
while the main recurrence \eqref{eq:Xkak} 
has the form
\bb
X_{k+1} = X_k ((\alpha_k + \beta_k) I + \beta_k F_k).
\label{eq:Xkmain}
\ee
Denote by $F_k=\left[ f_{i,j}^{(k)}\right]\in \R^{n\times n}$ and $G_k=F^2_k=\left[g_{i,j}^{(k)} \right] \in \R^{n \times n},$
so the equation \eqref{smena2} implies 
\begin{equation}\label{glavna}||F_{k+1}||^2_F= \sum_{\substack{i,j=1\\i\neq j}}^n\left[ \alpha_kf^{(k)}_{i,j}+\beta_kg^{(k)}_{i,j} \right]^2+\sum_{i=1}^n\left[(1+\left(f^{(k)}_{i,i}-1\right)\alpha_k+\left(g^{(k)}_{i,i}-1\right)\beta_k\right]^2.
\end{equation}
A minimization of previous expression by variables $\alpha_k$ and $\beta_k$ is now required to be done. 
For this purpose, consider the auxiliary function $G:\mathbb{R}^2\to\mathbb{R}$ defined by
$$G(x,y)=\sum_{\substack{i,j=1\\i\neq j}}^n\left[ x\cdot f^{(k)}_{i,j}+y\cdot g^{(k)}_{i,j} \right]^2+\sum_{i=1}^n\left[1+\left(f^{(k)}_{i,i}-1\right)\cdot x+\left(g^{(k)}_{i,i}-1\right)\cdot y\right]^2,\qquad (x,y)\in\mathbb{R}^2.
$$
According to \eqref{glavna}, we have $\|F_{k+1}\|^2_F = G(\alpha_k,\beta_k)$. Hence, the
coefficients $\alpha_k$ and $\beta_k$ have to be computed as the minimum of the function $G$. 
Since $G$ is convex quadratic function, they are computed as the solution of the following system of linear equations
$$
\frac{\partial G\left(\alpha_k,\beta_k\right)}{\partial x}=0,\qquad
\frac{\partial G\left(\alpha_k,\beta_k\right)}{\partial y}=0.
$$
By grouping coefficients with variables $\alpha_k$ and $\beta_k$ from previous two equations, we have the next system of linear equations:
\begin{equation*}
\begin{split}\label{sis1}\left[\sum_{\substack{i,j=1\\i\neq j}}^n\left(f_{i,j}^{(k)}\right)^2  +\sum_{i=1}^n\left(f_{i,i}^{(k)}-1\right)^2\right]\alpha_k + \left[\sum_{\substack{i,j=1\\i\neq j}}^n f_{i,j}^{(k)} g_{i,j}^{(k)}  +\sum_{i=1}^n\left(f_{i,i}^{(k)}-1\right)\left(g_{i,i}^{(k)}-1\right)\right]\beta_k=n-\sum_{i=1}^n f_{i,i}^{(k)},\end{split}
\end{equation*}

\begin{equation*}
\begin{split}\label{sis2}\left[\sum_{\substack{i,j=1\\i\neq j}}^n f_{i,j}^{(k)}g_{i,j}^{(k)} +\sum_{i=1}^n\left(f_{i,i}^{(k)}-1\right)\left(g_{i,i}^{(k)}-1\right)\right]\alpha_k + \left[\sum_{\substack{i,j=1\\i\neq j}}^n(g_{i,j}^{(k)} )^2  +\sum_{i=1}^n\left(g_{i,i}^{(k)}-1\right)^2\right]\beta_k=n-\sum_{i=1}^n g_{i,i}^{(k)}.\end{split}
\end{equation*}
The same system can be written in the following equivalent matrix form
\bb
\left[
\begin{array}{cc}
\displaystyle
\sum_{\substack{i,j=1\\i\neq j}}^n\left(f_{i,j}^{(k)}\right)^2+
\sum_{i=1}^n\left(f_{i,i}^{(k)}-1\right)^2
&
\displaystyle
\sum_{\substack{i,j=1\\i\neq j}}^n f_{i,j}^{(k)}g_{i,j}^{(k)}+
\sum_{i=1}^n\left(f_{i,i}^{(k)}-1\right)\left(g_{i,i}^{(k)}-1\right)
\\[1.2em]
\displaystyle
\sum_{\substack{i,j=1\\i\neq j}}^n f_{i,j}^{(k)}g_{i,j}^{(k)}+
\sum_{i=1}^n\left(f_{i,i}^{(k)}-1\right)\left(g_{i,i}^{(k)}-1\right)
&
\displaystyle
\sum_{\substack{i,j=1\\i\neq j}}^n\left(g_{i,j}^{(k)}\right)^2+
\sum_{i=1}^n\left(g_{i,i}^{(k)}-1\right)^2
\end{array}
\right]
\left[
\begin{array}{c}
\alpha_k\\
\beta_k
\end{array}
\right]
=
\left[
\begin{array}{c}
\displaystyle n-\sum_{i=1}^n f_{i,i}^{(k)}\\[0.8em]
\displaystyle n-\sum_{i=1}^n g_{i,i}^{(k)}
\end{array}
\right],
\label{eq:systalphabeta}
\ee
By solving above linear system of equations, one can get the optimal coefficients $\alpha_k$ and $\beta_k$ in each iteration, in the symbolic form. For such purpose denote
\begin{align}
B_0^{(k)} &= n-\sum_{i=1}^n f_{i,i}^{(k)},\label{koefreal1}\\
C_{00}^{(k)} &= \sum_{\substack{i,j=1\\i\neq j}}^n\left(f_{i,j}^{(k)}\right)^2  +\sum_{i=1}^n\left(f_{i,i}^{(k)}-1\right)^2,\\
C_{01}^{(k)} &= C_{10}^{(k)} =\sum_{\substack{i,j=1\\i\neq j}}^n f_{i,j}^{(k)} g_{i,j}^{(k)} +\sum_{i=1}^n\left(f_{i,i}^{(k)}-1\right)\left(g_{i,i}^{(k)}-1\right),\\
B_1^{(k)} &=n-\sum_{i=1}^n g_{i,i}^{(k)},\\
C_{11}^{(k)} &=\sum_{\substack{i,j=1\\i\neq j}}^n\left(g_{i,j}^{(k)}\right)^2  +\sum_{i=1}^n\left(g_{i,i}^{(k)}-1\right)^2.\label{koefreal2}
\end{align} 
for every $k\in \N_0$. 
The desired solution formulas can now be wrtitten as
\begin{align}\label{koef}
\alpha_k &=\frac{C_{11}^{(k)}B_0^{(k)}-C_{01}^{(k)}B_1^{(k)}}{C_{00}^{(k)}C_{11}^{(k)}-\left(C_{01}^{(k)}\right)^2},\qquad
\beta_k=\frac{C_{00}^{(k)}B_1^{(k)}-C_{10}^{(k)}B_0^{(k)}}{C_{00}^{(k)}C_{11}^{(k)}-\left(C_{01}^{(k)}\right)^2}.
\end{align} 
The previous expressions are valid in the case when the system \eqref{eq:systalphabeta} has unique solution, i.\,e. when $D_k=C_{00}^{(k)}C_{11}^{(k)}-\left(C_{01}^{(k)}\right)^2 \neq 0$. 

\medskip

If that is not true, one has to choose the coefficients $\alpha_k$ and $\beta_k$ in a different way. One possible idea is to use the solution of \eqref{eq:systalphabeta} obtained by Moore-Penrose inverse of the system matrix. 

\medskip

However, the empirical study showed that the best approach (in terms of total number of iterations) is use the values $\alpha_k=0$ and $\beta_k=1$, corresponding to the well-known Schultz method \eqref{Sch}. It is done whenever the absolute value of the denominator $D_k$ is less than $\delta |C_{00}^{(k)}C_{11}^{(k)}|$, where $\delta>0$ is the tolerance parameter. The complete pseudocode for computing the coefficients $\alpha_k$ and $\beta_k$ is given by the Algorithm \ref{optcoefit}.


\medskip

\begin{algorithm}[H]
\DontPrintSemicolon
\KwData{Matrices $F_k, G_k\in \R^{n\times n}$ and a tolerance $\delta>0$.}
\KwResult{The coefficients $\alpha_k$ and $\beta_k$.}

Compute
$C_{00}^{(k)}$, $C_{01}^{(k)}$, $C_{10}^{(k)}$, $C_{11}^{(k)}$,
$B_0^{(k)}$ and $B_1^{(k)}$
using expressions \eqref{koefreal1}--\eqref{koefreal2}.

$D_k := C_{00}^{(k)}C_{11}^{(k)} - C_{01}^{(k)}C_{10}^{(k)}$.

\If{$|D_k| \geqslant \delta |C_{00}^{(k)}C_{11}^{(k)}|$}{
    \[
        \alpha_k :=
        \frac{C_{11}^{(k)}B_0^{(k)} - C_{01}^{(k)}B_1^{(k)}}{D_k},
        \qquad
        \beta_k :=
        \frac{C_{00}^{(k)}B_1^{(k)} - C_{10}^{(k)}B_0^{(k)}}{D_k}.
    \]
}
\Else{
    \[
        \alpha_k := 0,
        \qquad
        \beta_k = 1.
    \]
}

\Return{$\alpha_k$, $\beta_k$}

\caption{Algorithm for the coefficients $\alpha_k$ and $\beta_k$ in each iteration}
\label{optcoefit}
\end{algorithm}

\medskip

Now we are ready to formulate the new iterative method based on the recurrence \eqref{eq:Xkmain}, which uses the coefficients $\alpha_k$ and $\beta_k$ determined by the Algorithm \ref{optcoefit}. The pseudocode of the new method \textbf{SSHP2} (Stable Scaled Hyper-power method of degree 2) is given by the Algorithm \ref{algor0}. 


\begin{algorithm}[H]\label{algor0}
\DontPrintSemicolon
 \KwData{Invertible matrix $A\in \R^{n\times n}$, initial approximation $X_0\in \R^{n\times n}$, tolerance $\delta>0$ and the precision $\epsilon>0$.}
 \KwResult{The approximate inverse matrix $A^{-1}$.}
 $F_0:=I-AX_0$\;
 $k:=0$\;
 \While{$||F_k||_F\geqslant \epsilon$}{
    $G_k:=F_k^2$\;
	Compute $\alpha_k$ and $\beta_k$ using Algorithm \ref{optcoefit} (for $F_k$, $G_k$ and $\delta$).
    
	$X_{k+1}:=X_k \left(\left(\alpha_k+\beta_k\right)I+\beta_kF_k\right)$\; 
    $k:=k+1$
    
    $F_k:=I-AX_k$\;
}
\Return $X_{k}$
 \caption{\textbf{SSHP2} method}
\end{algorithm}

\medskip 

The total time complexity of Algorithm \ref{optcoefit} is $\O(n^2)$, according to the expressions \eqref{koefreal1}-\eqref{koefreal2}. This is asymptotically less complex than matrix-matrix multiplication, which still remains predominant operation in the \texttt{while} loop of Algorithm \ref{algor0}. 

\medskip



We can summarize the results of this section into the following theorem.


\begin{thm}
Consider the invertible matrix $A\in \R^{n\times n}$ and the matrix $X_k \in \R^{n\times n}$ ($k\in \N_0$). Let $F_k=I-AX_k$, $G_k=F_k^2$ and $F_{k+1}=I-AX_{k+1}$, where $X_{k+1}$ is determined by \eqref{eq:Xkmain}. If the system \eqref{eq:systalphabeta} is non-singular, then
$$
\min_{\alpha_k, \beta_k\in \mathbb{R}} \|F_{k+1}\|_F^2,
$$
is achieved for the coefficients $\alpha_k$ and $\beta_k$ chosen from \eqref{koefreal1}-\eqref{koefreal2}.
\end{thm}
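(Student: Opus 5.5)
The plan is to read $\|F_{k+1}\|_F^2$ as a least-squares objective in $(\alpha_k,\beta_k)$ and identify its normal equations with \eqref{eq:systalphabeta}. By \eqref{smena2},
\[
F_{k+1} = I + \alpha_k (F_k - I) + \beta_k (G_k - I).
\]
Introduce the Frobenius inner product $\langle P,Q\rangle = \tr(P^TQ) = \sum_{i,j} p_{ij}q_{ij}$ and set $U=F_k-I$, $V=G_k-I$. Then
\[
\|F_{k+1}\|_F^2 = G(\alpha_k,\beta_k) = \|I+\alpha_k U+\beta_k V\|_F^2 ,
\]
which is the expansion \eqref{glavna}: off the diagonal only the $F_k,G_k$ entries contribute, and on the diagonal the entries are shifted by $-1$.

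Expanding the square gives
\[
G(x,y) = \|I\|_F^2 + 2x\langle I,U\rangle + 2y\langle I,V\rangle + x^2\langle U,U\rangle + 2xy\langle U,V\rangle + y^2\langle V,V\rangle .
\]
Next, identify the coefficients entrywise with the quantities \eqref{koefreal1}--\eqref{koefreal2}:
\[
\langle U,U\rangle = C_{00}^{(k)},\quad \langle U,V\rangle = C_{01}^{(k)},\quad \langle V,V\rangle = C_{11}^{(k)},\quad -\langle I,U\rangle = n-\tr F_k = B_0^{(k)},\quad -\langle I,V\rangle = B_1^{(k)} .
\]
Hence $G(x,y) = n - 2(B_0x+B_1y) + z^T C z$, where $z=(x,y)^T$ and $C$ is the Gram matrix of $U,V$.

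The key step is to show that $C$ is positive definite. As a Gram matrix it is always positive semidefinite, since $z^TCz = \|xU+yV\|_F^2 \ge 0$. Non-singularity of \eqref{eq:systalphabeta} means $D_k\neq 0$, and a positive semidefinite $2\times2$ matrix with nonzero determinant is positive definite. So $G$ is a strictly convex quadratic, coercive, and has a unique global minimizer. That minimizer is the unique critical point, i.e.\ the solution of $\nabla G = 0$, which reads $Cz = (B_0,B_1)^T$. This is exactly \eqref{eq:systalphabeta}.

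Finally, solving this $2\times 2$ system by Cramer's rule yields \eqref{koef}. Since $G(\alpha_k,\beta_k) = \|F_{k+1}\|_F^2$ for every choice of coefficients, the minimum over $\mathbb{R}^2$ is attained at these values. If desired, one can add the uniqueness remark that $G(z) - G(z^*) = (z-z^*)^TC(z-z^*) > 0$ for every $z\neq z^*$.

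The only genuine obstacle is the bookkeeping in the identification step, in particular that the diagonal terms with $-1$ and the off-diagonal terms combine into the single inner products $\langle U,V\rangle$ and so on. Once that is checked, positive definiteness follows at once from the Gram structure together with $D_k\ne0$.
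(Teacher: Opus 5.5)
Your proof is correct and follows essentially the same route as the paper: write $\|F_{k+1}\|_F^2$ via \eqref{smena2} as a quadratic $G(\alpha_k,\beta_k)$, set its gradient to zero to obtain \eqref{eq:systalphabeta}, and solve by Cramer's rule to get \eqref{koef}. Your Gram-matrix formulation makes explicit the convexity the paper only asserts, and it also yields uniqueness of the minimizer: the positive semidefinite Gram matrix is positive definite because $D_k\neq 0$.
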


\section{Convergence analysis}\label{convergence}

Throughout this section, we assume that we work with real matrices and that the initial approximation is of the form $X_0=\alpha A^T$, where $0<\alpha<2/\|A\|_2^2$ is a suitable constant, and $A\in \R^{n\times n}$ is the invertible matrix whose inverse we want to find. In accordance with this, the fact that $A$ is invertible will not be explicitly stated in the results of this section.

\medskip

The main point is that the convergence analysis of our method can be treated in terms of the eigenvalues of the matrix $F_k$. Notice that for such a choice of $X_0$ the sequence $(F_k)_{k=0}^\infty$ consists of real and symmetric matrices, implying that the eigenvalues of $F_k$ are real. In our analysis we will consider the equivalent and simpler form of the method. 
Let us denote $$\sigma(F_k)=\{\lambda_{i,k}:i = 1,2,\ldots,n\},\qquad k\in\N_0.$$ 
From \eqref{smena2}  we get (see \cite[Theorem 1.1.6]{Horn}):
\begin{equation}\label{vezab0b1}\lambda_{i,k+1}= 1-(\alpha_k+\beta_k) + \alpha_k\lambda_{i,k} +\beta_k\lambda_{i,k}^2,\qquad i\in\{1,\ldots,n\}.\end{equation}
Since $(F_k)_{k=0}^\infty$ is a sequence of real symmetric matrices we can rewrite the formulas \eqref{koef} in more suitable form. Starting from the equalities \eqref{koefreal1}-\eqref{koefreal2} we get:
$$
\begin{aligned}
   B_0^{(k)} &= n-\sum_{i=1}^n f_{i,i}^{(k)} = \sum_{i=1}^n \left(1-f_{i,i}^{(k)}\right) = \tr (I-F_k),\\[3pt]
   B_1^{(k)} & = n-\sum_{i=1}^n g_{i,i}^{(k)} = \sum_{i=1}^n \left(1-g_{i,i}^{(k)}\right) = \tr(I-F_k^2),\\[3pt]
   C_{00}^{(k)} &= \sum_{\substack{i,j=1 \\i\neq j}}^n\left(f_{i,j}^{(k)}\right)^2  +\sum_{i=1}^n\left(f_{i,i}^{(k)}-1\right)^2 = \sum_{i=1}^n \sum_{j=1}^n \left(f_{i,j}^{(k)}\right)^2 + \sum_{i=1}^n \left(1-2f_{i,i}^{(k)} \right) \\
   & = \tr( F_k^T F_k) + \tr (I-2F_k) = \tr(I-2F_k + F_k^2) = \tr((I-F_k)^2) = \| I-F_k\|_F^2,\\[3pt]
   C_{11}^{(k)} &  =\sum_{\substack{i,j=1\\i\neq j}}^n\left(g_{i,j}^{(k)}\right)^2  +\sum_{i=1}^n\left(g_{i,i}^{(k)}-1\right)^2 =  \sum_{i=1}^n \sum_{j=1}^n \left(g_{i,j}^{(k)}\right)^2 + \sum_{i=1}^n \left(1-2g_{i,i}^{(k)}\right) \\
   & = \tr
   \left( (F_k^2)^T F_k^2\right) + \tr (I-2F_k^2) = \tr(I-2F_k^2 + F_k^4) = \tr((I-F_k^2)^2) = \| I-F_k^2\|_F^2,\\[3pt]
   C_{01}^{(k)} & = C_{10}^{(k)}  =\sum_{\substack{i,j=1\\i\neq j}}^n f_{i,j}^{(k)} g_{i,j}^{(k)} +\sum_{i=1}^n\left(f_{i,i}^{(k)}-1\right)\left(g_{i,i}^{(k)}-1\right)\\&=\sum_{i=1}^n \sum_{j=1}^n f_{i,j}^{(k)}g_{i,j}^{(k)} + \sum_{i=1}^n \left( 1- f_{i,i}^{(k)} - g_{i,i}^{(k)} \right)  \\
   & = \tr ( F_k^T F_k^2) + \tr (I-F_k-F_k^2)= \tr (I-F_k-F_k^2+F_k^3)\\& = \tr( I-F_k - F_k^2 (I-F_k)) = \tr ((I-F_k)(I-F_k^2)),
\end{aligned}
$$
Now substituting in \eqref{koef}, gives us
\bb \label{alfabeta1}
\aligned
\alpha_k & = \frac{- \| F_k^2 - I \|_F^2 \tr (F_k-I) + \tr(F_k^2 -I)\tr ( (F_k^2-I)(F_k-I)) }{ \| F_k -I \|_F^2 \| F_k^2 - I \|_F^2 - \left( \tr ( (F_k^2-I)(F_k-I)) \right)^2} \\
& = \frac{ \| I-F_k^2\|_F^2 \tr (I-F_k) - \tr(I-F_k^2)\tr ( (I-F_k^2)(I-F_k)) }{ \| I- F_k\|_F^2 \| I- F_k^2\|_F^2 - \left( \tr ( (I-F_k^2)(I-F_k)) \right)^2}, \\[6pt]
\beta_k & = \frac{- \| F_k - I \|_F^2 \tr (F_k^2-I) + \tr(F_k -I)\tr ( (F_k^2-I)(F_k-I)) }{ \| F_k -I \|_F^2 \| F_k^2 - I \|_F^2 - \left( \tr ( (F_k^2-I)(F_k-I)) \right)^2} \\
& = \frac{ \| I-F_k\|_F^2 \tr (I-F_k^2) - \tr(I-F_k)\tr ( (I-F_k^2)(I-F_k)) }{ \| I- F_k\|_F^2 \| I- F_k^2\|_F^2 - \left( \tr ( (I-F_k^2)(I-F_k)) \right)^2},
\endaligned
\ee
provided that the denominator $D_k=\| I- F_k\|_F^2 \| I- F_k^2\|_F^2 - \left( \tr ( (I-F_k^2)(I-F_k)) \right)^2\neq 0,$ (while otherwise it is $(\alpha_k,\beta_k)=(0,1)$). For simplicity, denote the numerators of the expressions in \eqref{alfabeta1}, by $A_k$ and $B_k$, respectively.
The expressions \eqref{alfabeta1} are equivalent to \eqref{koef}, and were derived primarily for the purpose of the convergence analysis.

\medskip

The following result provides some basic properties of the sequence $(F_k)_{k=0}^\infty$.

\begin{thm}\label{Osobine}
The following holds for the sequence $(F_{k})_{k=0}^\infty$:
\begin{enumerate}
  \item[\textbf{(a)}] If $D_k\neq 0$, then $\tr(F_{k+1} (I-F_k))=0$ and $\tr(F_{k+1}(I-F_k^2))=0$;
  \item[\textbf{(b)}] If $D_{k-1}\neq 0$, then $\| F_k\|_F^2=\tr(F_k)$ and $\| I-F_k\|_F^2 = \tr(I-F_k)$;
  \item[\textbf{(c)}] If $D_{k-1}\neq 0$, then $\| F_k \|_F^2 + \| I- F_k\|_F^2 = n$;
  \item[\textbf{(d)}] $\|F_k\|_F^2 - \|F_{k+1} \|_F^2 = (\alpha_k + \beta_k-1)\cdot \| I- F_k \|_F^2$ for all $k\in \mathbb{N}$ such that $D_k\neq 0$;
  \item[\textbf{(e)}] If $D_k\neq 0$ for all $k\in\mathbb{N}_0$ then the sequence $(\|F_k\|)_{k=0}^\infty$ is a non-increasing sequence, $(\| I-F_{k}\|_F)_{k=0}^\infty$ is a non-decreasing sequence, $\displaystyle\lim_{k\to \infty} \| F_k \|_F = L$, and $\displaystyle\lim_{k\to \infty} \| I-F_k \|_F = \sqrt{n -L^2}$.
\end{enumerate}
\end{thm}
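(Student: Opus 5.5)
The approach is to read everything through the orthogonality (normal) equations of the least-squares problem that defines $(\alpha_k,\beta_k)$, and then use symmetry of $F_k$ together with polynomial commutativity.

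\textbf{Part (a).} By \eqref{smena2} we have $F_{k+1}=I-\alpha_k(I-F_k)-\beta_k(I-F_k^2)$. So $F_{k+1}$ is the residual of approximating $I$ by the span of $U_k=I-F_k$ and $V_k=I-F_k^2$ in the Frobenius inner product $\langle P,Q\rangle=\tr(P^TQ)$. When $D_k\neq0$, the system \eqref{eq:systalphabeta} is exactly the normal equations:
\[
\langle U_k,U_k\rangle\alpha_k+\langle U_k,V_k\rangle\beta_k=\langle U_k,I\rangle,
\]
and similarly for $V_k$. This was checked above via $C_{00}^{(k)}=\|I-F_k\|_F^2$, $C_{01}^{(k)}=\tr((I-F_k)(I-F_k^2))$ and $B_0^{(k)}=\tr(I-F_k)$. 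Hence $\langle U_k,F_{k+1}\rangle=\langle V_k,F_{k+1}\rangle=0$. Since $F_k$ is symmetric, the transposes disappear, giving $\tr(F_{k+1}(I-F_k))=0$ and $\tr(F_{k+1}(I-F_k^2))=0$.

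\textbf{Parts (b) and (c).} Apply (a) with index $k-1$. Note that $I-F_k=\alpha_{k-1}U_{k-1}+\beta_{k-1}V_{k-1}$, so $\tr(F_k(I-F_k))=0$. This is $\tr(F_k)=\tr(F_k^2)=\|F_k\|_F^2$, using symmetry of $F_k$. Then
\[
\|I-F_k\|_F^2=\tr(I-2F_k+F_k^2)=\tr(I-F_k).
\]
Adding the two identities gives $\|F_k\|_F^2+\|I-F_k\|_F^2=\tr(F_k)+\tr(I-F_k)=n$, which is (c).

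\textbf{Part (d).} Write $F_{k+1}=I-W$ with $W=\alpha_kU_k+\beta_kV_k$. Orthogonality from (a) gives $\langle W,F_{k+1}\rangle=0$, so $\|F_{k+1}\|_F^2=\langle I,F_{k+1}\rangle=\tr(F_{k+1})$. Expanding,
\[
\tr(F_{k+1})=n-\alpha_k\tr(I-F_k)-\beta_k\tr(I-F_k^2).
\]
Now use (b) at step $k$, which requires $D_{k-1}\neq0$; I will assume this as part of the hypothesis in the regime where the formula is used, since otherwise $(\alpha_{k-1},\beta_{k-1})=(0,1)$ and this step must be revisited. Under it, $\tr(I-F_k)=\|I-F_k\|_F^2$ and $\tr(I-F_k^2)=n-\|F_k\|_F^2$. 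By (c), $n-\|F_k\|_F^2=\|I-F_k\|_F^2$, so $\tr(I-F_k^2)=\|I-F_k\|_F^2$ as well. Therefore
\[
\|F_{k+1}\|_F^2=n-(\alpha_k+\beta_k)\|I-F_k\|_F^2=\|F_k\|_F^2-(\alpha_k+\beta_k-1)\|I-F_k\|_F^2,
\]
which is exactly (d). The main obstacle is this dependence on $D_{k-1}\neq0$ versus only $D_k\neq0$ in the statement. The case $k=0$ is also delicate for the same reason, since $X_0=\alpha A^T$ need not satisfy (b). I would first try to derive (d) only for $k\ge1$ with all previous $D$'s nonzero, which is consistent with ``$k\in\mathbb{N}$'' in (d) and with the standing hypothesis in (e).

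\textbf{Part (e).} Monotonicity of $\|F_k\|_F$ does not need (d): it follows from optimality. The choice $(\alpha,\beta)=(0,1)$ (Schultz) yields $F_{k+1}=F_k^2$. With eigenvalues $\lambda_{i,k}$ of the symmetric $F_k$, we have $\|F_k^2\|_F^2=\sum\lambda_{i,k}^4$. A cleaner comparison is $(\alpha,\beta)=(1,0)$, which gives $F_{k+1}=F_k$. Minimality then gives $\|F_{k+1}\|_F\le\|F_k\|_F$. So $\|F_k\|_F$ is non-increasing and bounded below, hence converges to some $L$. By (c) for $k\ge1$, $\|I-F_k\|_F^2=n-\|F_k\|_F^2$, which is non-decreasing and tends to $n-L^2$.
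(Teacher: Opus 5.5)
Your proof follows the paper's route. Part (a) is the orthogonality of the least-squares residual $F_{k+1}$ to $I-F_k$ and $I-F_k^2$. Parts (b)--(c) come from applying (a) at index $k-1$ to $I-F_k=\alpha_{k-1}(I-F_{k-1})+\beta_{k-1}(I-F_{k-1}^2)$, and (d) is the same trace computation. Your comparison with the feasible choice $(\alpha,\beta)=(1,0)$, which returns $F_{k+1}=F_k$, makes the paper's ``by construction'' in (e) explicit.

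The one real gap is the one you flag. In (d) you add the hypothesis $D_{k-1}\neq0$ instead of deriving it from $D_k\neq0$. The paper's proof needs the same fact, since it invokes (b) at index $k$, but never says so. It closes quickly:
\begin{itemize}
\item For symmetric $F_{k-1}$, Cauchy--Schwarz gives $D_{k-1}=0$ iff $I-F_{k-1}$ and $I-F_{k-1}^2=(I-F_{k-1})(I+F_{k-1})$ are linearly dependent, i.e.\ iff $\sigma(F_{k-1})\subseteq\{1,\mu\}$ for some $\mu\in\R$.
\item In that case the rule $(\alpha_{k-1},\beta_{k-1})=(0,1)$ gives $F_k=F_{k-1}^2$, so $\sigma(F_k)\subseteq\{1,\mu^2\}$.
\item Hence $I-F_k^2=(1+\mu^2)(I-F_k)$, and $D_k=0$.
\end{itemize}
Contrapositively, $D_k\neq0$ implies $D_{k-1}\neq0$ for every $k\ge1$. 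So (b) and (c) are available at index $k$, and your derivation gives (d) under exactly the stated hypothesis.

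Your restriction to $k\ge1$ for the monotonicity of $\|I-F_k\|_F$ in (e) is necessary, not just cautious. The paper's proof applies (c) at $k=0$, where it is unavailable, and the inequality $\|I-F_0\|_F\le\|I-F_1\|_F$ can fail for admissible data. Take $A=\diag(\sqrt{1.9},\sqrt{1.8},\sqrt{1.7})$ and $X_0=A^T$, which is allowed since $1<2/\|A\|_2^2$. Then $F_0=\diag(-0.9,-0.8,-0.7)$ and $D_0\neq0$, so (c) holds at $k=1$, giving $\|I-F_1\|_F^2=3-\|F_1\|_F^2\le3<9.74=\|I-F_0\|_F^2$. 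The non-decreasing claim should therefore be stated for $k\ge1$, which is what you prove.
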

\begin{proof}

Let $k\in\N$ be arbitrary.

\begin{enumerate}

\item[\textbf{(a)}] From \eqref{smena2} and using fact that $\alpha_k$ and $\beta_k$ 
from \eqref{koef} we get
$$
\begin{aligned}
    \tr(F_{k+1}(I-F_k)) & = \tr( (I-\alpha_k(I-F_k)-\beta_k(I-F_k^2))(I-F_k) \\
    & = \tr(I-F_k) - \alpha_k \tr( (I-F_k)^2) - \beta_k\tr(I-F_k^2) \\
    & = \tr(I-F_k) - \left(\underbrace{\alpha_k \| I-F_k\|_F^2 + \beta_k \tr(I-F_k^2)(I-F_k)}_{=\tr(I-F_k)}\right) = 0, \\
    \tr(F_{k+1}(I-F_k^2)) & = \tr( (I-\alpha_k(I-F_k)-\beta_k(I-F_k^2))(I-F_k^2) \\
    & = \tr(I-F_k^2) - \alpha_k \tr( (I-F_k)(I-F_k^2)) - \beta_k\tr((I-F_k^2)^2) \\
    & = \tr(I-F_k^2) - \left(\underbrace{\alpha_k \tr( (I-F_k)(I-F_k^2)) + \beta_k \| I-F_k^2 \|_F^2}_{=\tr(I-F_k^2)}\right) = 0.
\end{aligned}
$$ 
\item[\textbf{(b)}] From part \textbf{(a)} it immediately follows that 
$$
\begin{aligned}
    \| F_k \|_F^2 & = \tr( F_k (I -\alpha_{k-1} (I-F_{k-1}) -\beta_{k-1} (I-F_{k-1}^2))) \\
    & = \tr(F_k) - \alpha_{k-1} \underbrace{\tr(F_k(I-F_{k-1})}_{=0} - \beta_{k-1} \underbrace{\tr(F_k(I-F_{k-1})^2))}_{=0} =\tr(F_k)
\end{aligned}
$$
and 
$
\| I-F_k\|_F^2 = \tr( (I-F_k)^2) = \tr(I-2F_k + F_k^2) =\tr(I-F_k),
$
provided that $D_{k-1}\neq 0$.

\item[\textbf{(c)}]
From the previous part we have that 
$
\|F_k\|_F^2 + \|I-F_k\|_F^2 = \tr(F_k) + \tr(I-F_k) = \tr(I)= n.
$
\item[\textbf{(d)}] Using the result from part \textbf{(b)} we get:
$$
\begin{aligned}
   \|F_k\|_F^2 - \|F_{k+1}\|_F^2  &= \tr(F_k) - \tr(F_{k+1}) \\
    & = \tr(F_k) - \tr( I - \alpha_k (I-F_k) - \beta_k (I-F_k^2)) \\
    & = \tr( - (I-F_k) + \alpha_k (I-F_k) + \beta_k (I-F_k)) \\
    & = \tr ( (\alpha_k+\beta_k-1) (I-F_k))  = (\alpha_k + \beta_k -1)\cdot \tr(I-F_k) \\&= (\alpha_k + \beta_k -1)\cdot \| I-F_k \|_F^2.
\end{aligned}
$$
\item[\textbf{(e)}]  By construction of the method we have that the sequence $(\|F_k\|_F)_{k=0}^\infty$ is non-increasing, and from part \textbf{(c)} we see that $\|I-F_k\|_F^2 = n - \|F_k\|_F^2$ which implies that $(\|I-F_k\|_F)_{k=0}^\infty$ is non-decreasing, and we have the equality $\displaystyle\lim_{k\to \infty}\| I-F_{k}\|_F=\sqrt{n-L^2}$ where $L=\displaystyle\lim_{k\to \infty} \| F_k \|_F$.\qedhere
\end{enumerate}
\end{proof}

Before we proceed with a more detailed analysis of the convergence of the method \textbf{SSHP2} let us examine the critical case when $D_k=0$ for some $k\in\mathbb{N}_0$.

\begin{thm}\label{trivija} 
Suppose that $D_{k_0}=0$ for some $k_0\in\mathbb{N}_0$. Then $(F_k)_{k=0}^\infty$  converges to an idempotent matrix. Furthermore, if $1\not\in\sigma(F_k)$ for $k\in\mathbb{N}$ the iterative method \textup{\textbf{SSHP2}} converges.
\end{thm}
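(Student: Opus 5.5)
The plan is to read the condition $D_{k_0}=0$ as equality in the Cauchy--Schwarz inequality, and from it deduce that $F_{k_0}$ has at most two distinct eigenvalues. Since every $F_k$ is real symmetric, $\tr(UV)$ is the Frobenius inner product on symmetric matrices. So
$$
D_{k_0}=\|U\|_F^2\|V\|_F^2-\tr(UV)^2,\qquad U=I-F_{k_0},\quad V=I-F_{k_0}^2 .
$$
Hence $D_{k_0}=0$ forces $U$ and $V$ to be linearly dependent. Either $U=0$, in which case $F_{k_0}=I$ is already idempotent and the Schultz step keeps it there. Or $V=cU$ for some $c\in\R$. Diagonalising $F_{k_0}$ orthogonally, $1-\lambda^2=c(1-\lambda)$ for every $\lambda\in\sigma(F_{k_0})$. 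Therefore every eigenvalue equals $1$ or $\mu:=c-1$.

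Next I would show that the iteration is frozen into the Schultz step from $k_0$ on. When $D_{k_0}=0$, Algorithm \ref{optcoefit} takes $(\alpha_{k_0},\beta_{k_0})=(0,1)$, and by \eqref{vezab0b1} this gives $\lambda_{i,k_0+1}=\lambda_{i,k_0}^2$. The spectrum then lies in $\{1,\mu^2\}$, with the same eigenvectors. A symmetric matrix with at most two eigenvalues, one of them $1$, again has $I-F$ and $I-F^2$ linearly dependent, so $D_{k_0+1}=0$. By induction
$$
F_k=F_{k_0}^{2^{k-k_0}}\quad\text{for } k\ge k_0 .
$$

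The key step, which I expect to be the main obstacle, is controlling $\mu$, since a priori $|\mu|$ could exceed $1$. I would split into two cases.

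\emph{Case $k_0=0$.} Here $F_0=I-\alpha AA^T$ with $0<\alpha<2/\|A\|_2^2$, so $\sigma(F_0)\subset(-1,1)$. Then $1\notin\sigma(F_0)$, so $F_0=\mu I$ with $|\mu|<1$, and $F_k=\mu^{2^k}I\to 0$.

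\emph{Case $k_0\ge1$.} Take the first index $k_0$ with $D_{k_0}=0$, so that $D_{k_0-1}\neq0$. Theorem \ref{Osobine}(b) then gives $\|F_{k_0}\|_F^2=\tr(F_{k_0})$. If $\mu$ has multiplicity $m\geq1$, this reads
$$
m\mu^2+(n-m)=m\mu+(n-m),
$$
so $\mu\in\{0,1\}$ and $F_{k_0}$ is already an orthogonal projector. Consequently $F_k=F_{k_0}$ for all $k\ge k_0$.

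In both cases $F_k$ converges to the orthogonal projector onto the eigenspace of $F_{k_0}$ for the eigenvalue $1$, which proves the first claim. For the second, suppose $1\notin\sigma(F_k)$. Then this eigenspace is trivial and $F_k\to 0$. Since
$$
X_k=A^{-1}(I-F_k),
$$
we get $X_k\to A^{-1}$, so \textbf{SSHP2} converges. In the case $k_0\ge1$, the hypothesis together with $\mu\in\{0,1\}$ forces $F_{k_0}=0$, so the method has in fact terminated exactly.
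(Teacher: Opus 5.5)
Your proof is correct and follows essentially the same route as the paper. Equality in Cauchy--Schwarz puts $\sigma(F_{k_0})\subset\{1,c-1\}$; for the first such index with $k_0\geq 1$, part \textbf{(b)} of Theorem \ref{Osobine} forces $c-1=0$, so $F_{k_0}$ is a fixed orthogonal projector; and when $D_0=0$ the bound $r(F_0)<1$ forces $F_0=(c-1)I$. You are in fact slightly more careful than the paper in three places: you check that $D_k=0$ persists, so the Schultz step really is used from $k_0$ on; you do not exclude $c=0$ without justification; and you obtain the correct $F_k=\mu^{2^k}I$ where the paper writes $(c-1)^kI$.
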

\begin{proof}
 Let $k_0$ be the first non-negative integer for which $D_k=0$.
By construction, we have that $$\| F_{k_0} \|_F \leqslant \| F_{k_0-1}\|_F \leqslant \dots \leqslant \| F_0 \| \leqslant n\cdot r(F_0).$$
From the Cauchy-Schwarz inequality it follows that $D_{k_0}=0$ if and only if the matrices $I-F_{k_0}$ and $I-F_{k_0}^2$ are linearly  dependent in $\mathbb{R}^{n\times n}$, that is if $I-F_{k_0}^2=c\cdot(I-F_{k_0})$ for some $c\in\mathbb{R}\setminus\lbrace 0 \rbrace$. This implies $$1-\lambda_{i,k_0}^2 = c\cdot(1-\lambda_{i,k_0})\iff (1-\lambda_{i,k_0})(1+\lambda_{i,k_0}-c)=0,\qquad i\in\{1,\dots,n\}.$$
Thus
$
\lambda_{i,k_0} = c-1$ or $\lambda_{i,k} = 1$ for $i\in\{1,\dots, n\}
$.
This means that there exists a (possibly trivial) orthogonal projection $P$ such that 
$
F_{k_0} = (c-1)P + I-P.
$
However, since $D_{k_0-1}\neq 0$, it follows by part \textbf{(b)} of Theorem \ref{Osobine} that $\tr(I-F_{k_0}^2) = \tr(I-F_{k_0})$, which in turn implies that $c=1$, so
$
F_{k_0} = I-P.
$
From the definition of the sequence $(F_k)_{k=0}^\infty$, it follows that all subsequence iterations will also be equal to the projection $I-P$.  
If $1\not \in\sigma(F_k)$ for all $k\in\mathbb{N}$ it follows that $F_{k_0}=0$.

If $D_0=0$ we again reach the conclusion that $\lambda_{i,0}=c-1$ or $\lambda_{i,0}=1$. From the assumption that $\alpha < 2/ \| A \|_2^2$ it follows that $r(F_0)<1$, so it follows that $\lambda_{i,0}=c-1$ where $|c-1|<1$, and subsequently $F_0=(c-1)I$. It follows that $F_k=(c-1)^k I$, and since $|c-1|<1$ we conclude that $F_k$ converges to the zero matrix.
\end{proof}

\begin{thm}\label{Osobine2}
The following holds for the sequence $(F_{k})_{k=0}^\infty$:
\begin{itemize}
    \item[\textbf{(a)}] The sequence $(\|F_k\|)_{k=0}^\infty$ is non-increasing and convergent;
    \item[\textbf{(b)}] For all $k\in \mathbb{N}$ we have $\alpha_k + \beta_k \geqslant1$  and $\displaystyle\lim_{k\to \infty}(\alpha_k+\beta_k)=1$.
\end{itemize}
\end{thm}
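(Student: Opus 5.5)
Part (a) follows from the construction, with a case split on whether some $D_k$ vanishes. If $D_k\neq 0$, then $(\alpha_k,\beta_k)$ minimizes $G$ over $\mathbb{R}^2$. The choice $(\alpha,\beta)=(1,0)$ gives $F_{k+1}=F_k$ by \eqref{smena2}, so
$$\|F_{k+1}\|_F^2=\min G\leqslant G(1,0)=\|F_k\|_F^2 .$$
If $D_{k_0}=0$ for some $k_0$, I invoke Theorem \ref{trivija}. From that index on the iterates are constant, $F_k=I-P$ (or $F_k=(c-1)^kI$ when $k_0=0$), so the norms are eventually constant or decreasing. Hence $(\|F_k\|_F)$ is non-increasing in all cases, and it is bounded below by $0$, so it converges.

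For the inequality in part (b), I first assume $D_k\neq0$ for the relevant $k$. Part (d) of Theorem \ref{Osobine} gives
$$\|F_k\|_F^2-\|F_{k+1}\|_F^2=(\alpha_k+\beta_k-1)\,\|I-F_k\|_F^2 .$$
The left side is $\geqslant 0$ by part (a). Also $\|I-F_k\|_F^2>0$, because $F_k=I$ is impossible: $F_k=I$ would mean $AX_k=0$, and then $X_{k+1}=X_k(\cdots)$ stays zero and the norms would not decrease; but $r(F_0)<1$ together with monotonicity gives $\|F_k\|_F\leqslant\|F_0\|_F<\sqrt n=\|I\|_F$. Dividing, I get $\alpha_k+\beta_k\geqslant1$. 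In the degenerate branch $(\alpha_k,\beta_k)=(0,1)$ we have $\alpha_k+\beta_k=1$, which also satisfies the inequality.

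For the limit, by part (e) of Theorem \ref{Osobine} we have $\|I-F_k\|_F^2=n-\|F_k\|_F^2\geqslant n-\|F_0\|_F^2$ (when $D_0\neq0$; otherwise $\|I-F_k\|_F^2=\tr(I-F_k)$ from $k\geqslant1$ on, and I bound directly). This lower bound $c_0$ is strictly positive, since $\|F_0\|_F^2=\sum\lambda_{i,0}^2<n$ because $r(F_0)<1$. Then
$$0\leqslant\alpha_k+\beta_k-1\leqslant \frac{\|F_k\|_F^2-\|F_{k+1}\|_F^2}{c_0},$$
and the right side tends to $0$ because $\|F_k\|_F^2$ converges by part (a). This gives $\alpha_k+\beta_k\to1$. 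In the case where some $D_{k_0}=0$, the iterates are eventually constant and $(\alpha_k,\beta_k)=(0,1)$ from then on, so the limit is trivially $1$.

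The main obstacle is obtaining a uniform positive lower bound on $\|I-F_k\|_F^2$. This requires $\|F_0\|_F^2<n$ and care with the index shift: parts (b) and (c) of Theorem \ref{Osobine} need $D_{k-1}\neq0$, so the identity $\|I-F_k\|_F^2=n-\|F_k\|_F^2$ is only available from $k=1$ on. That is why the statement is restricted to $k\in\mathbb N$. For $k\geqslant1$ I use $\|F_k\|_F\leqslant\|F_0\|_F<\sqrt n$ to get the bound.
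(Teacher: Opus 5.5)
Your proof is correct and follows the same route as the paper. Part (a) is monotonicity by construction, which your comparison with $G(1,0)$ makes explicit, plus Theorem \ref{trivija} for the degenerate case. Part (b) uses part (d) of Theorem \ref{Osobine} divided by a uniform positive lower bound on $\|I-F_k\|_F^2$; you make this bound explicit as $n-\|F_0\|_F^2>0$ using $r(F_0)<1$, where the paper only invokes part (e).
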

\begin{proof}

\begin{itemize}
\item[\textbf{(a)}] This result follows directly from Theorem \ref{Osobine} and the analysis of the case when $D_k=0$.
   
   \item[\textbf{(b)}] From part \textbf{(d)} of Theorem \ref{Osobine} using that $(\|F_k\|_F)_{k=0}^\infty$ is a non-increasing sequence we get $$(\alpha_k + \beta_k -1)\cdot \| I-F_k \|_F^2\geqslant0$$whenever $D_k\neq 0$. Thus, we have $\alpha_k+\beta_k\geqslant 1$ when $D_k\neq 0$. If $D_{k_0}=0$ for some $k_0\in \mathbb{N}_0$ then $\alpha_k+\beta_1=1$ for all natural numbers $k\geqslant k_0$. 
   We conclude that if there exists a non-negative integer $k_0$ such that $D_{k_0}=0$ then the sequence $(\alpha_k+\beta_k)_{k=0}^\infty$ trivially converges to $1$, and if $D_k\neq 0$ for all $k\in\mathbb{N}_0$ it follows follows from part \textbf{(a)} that $$\lim_{k\to\infty}(\alpha_k + \beta_k -1)
   \cdot\| I-F_k \|_F^2=0$$and from part \textbf{(e)} of Theorem \ref{Osobine} the expression $\|I-F_k\|_F^2$ is positive and bounded from below, it follows that $\lim\limits_{k\to\infty}(\alpha_k+\beta_k-1)=0$.\qedhere
\end{itemize}
\end{proof}

\begin{rem}
According to Theorem \ref{trivija}, we mainly study the iterative process \textup{\textbf{SSHP2}} with the additional condition that $D_k\neq 0$ for all $k\in\mathbb{N}_0$ without loss of generality.
\end{rem}

Notice that formulas \eqref{alfabeta1} can be rewritten in a way that will allow us to analyze their behavior in a different way. For simplicity, let $A_k$ denote the numerator of $\alpha_k$, $B_k$ the numerator of $\beta_k$ and $D_k$ their common denominator (and $D_k \geqslant 0$ due to the Cauchy-Schwarz inequality).  Notice that using Theorem \ref{Osobine} $A_k$, $B_k$ and $D_k$ can be further rewritten as 
\begin{align}
    A_k & -\| I-F_k\|_F^2 \,\tr(F_k^3 - F_k^4) \label{Ak2}, \\
    B_k& =  \| I-F_k\, \|_F^2 \tr(F_k^2 - F_k^3) \label{Bk2}, \\
    D_k & = \| I-F_k\|_F^2\, \| F_k - F_k^2 \|_F^2 - (\tr(F_k^2 - F_k^3))^2 \label{Dk2}.
\end{align}
Indeed, from Theorem \ref{Osobine} we get
$$
\begin{aligned}
    D_k& = \| I-F_K^2\|_F^2 \tr(I-F_k) - (\tr(( I-F_k)(I-F_k^2)))^2 \\
    & = \tr( (I-F_k^2)^2) \tr(I-F_k) - ( \tr(I-F_k - F_k^2 +F_k^3))^2 \\
    & = \tr( I-2F_k^2 + F_k^4) \tr(I-F_k) - ( \tr(I-F_k) - \tr(F_k^2- F_k^3))^2 \\
    & = \tr(I-F_k) \tr(I-2F_k + F_k^4) - (\tr(I-F_k))^2 + 2 \tr(I-F_k)\tr(F_k^2-F_k^3) - (\tr(F_k^2 - F_k^3))^2 \\
    & = \tr(I-F_k) \left[  \tr(I-2F_k + F_k^4) - \tr(I-F_k) + 2\tr(F_k^2- F_k^3) \right] - (\tr(F_k^2 - F_k^3))^2 \\
    & = \tr(I-F_k) \tr(F_k^4 + F_k-2F_k^3 ) - (\tr(F_k^2 - F_k^3))^2 \\
    & = \tr(I-F_k) \tr(F_k^4 + F_k^2-2F_k^3 )- (\tr(F_k^2 - F_k^3))^2 \\
    & = \tr(I-F_k) \tr( (F_k-F_k^2)^2) -  (\tr(F_k^2 - F_k^3))^2  = \| I-F_k\|_F^2 \| F_k - F_k^2 \|_F^2 - (\tr(F_k^2 - F_k^3))^2, \\ \\
    B_k & =  (\tr(I-F_k))^2 - \tr(I-F_k) \tr((I-F_k)(I-F_k^2)) \\
    & = \tr(I-F_k) \left[ \tr(I-F_K) -  \tr((I-F_k)(I-F_k^2)) \right] \\
    & = \tr(I-F_k) \tr( (I-F_k) - (I-F_k)(I-F_k^2)) \\
    & = \tr(I-F_k) \tr( (I-F_k) ( I- (I-F_k^2))) = \tr(I-F_k) \tr( (I-F_k)F_k^2) = \| I-F_k \|_F^2 \tr(F_k^2 - F_k^3),\\ \\
    A_k & = \tr(I-F_k) \| I-F_k^2\|_F^2 - \tr(I-F_k) \tr( (I-F_k)(I-F_k^2)) \\
    & = \tr(I-F_k) \left( \tr( I-F_k^2)^2 - \tr( (I-F_k)(I-F_k^2)) \right) \\
    & = \tr(I-F_k) \tr\left( (I-F_k^2)^2 - (I-F_k)(I-F_k^2)   \right) \\
    & = \tr(I-F_k) \tr\left( (I-F_k^2)(F_k - F_k^2) \right) \\
    &= \tr(I-F_k) \tr \left( F_k - F_k^2 - F_k^3 + F_k^4 \right)  = -\tr(I-F_k) \tr(F_k^3- F_k^4)  = -\| I-F_k\|_F^2 \tr(F_k^3 - F_k^4).
\end{aligned}
$$
This gives us the following formulas for $\alpha_k$ and $\beta_k$ when $D_k\neq 0$:
\begin{equation}\label{alfa2}
    \alpha_k = \frac{A_k}{D_k} = \frac{-\| I-F_k\|_F^2 \tr(F_k^3 - F_k^4)}{\| I-F_k\|_F^2 \| F_k - F_k^2 \|_F^2 - (\tr(F_k^2 - F_k^3))^2},
\end{equation}
\begin{equation}\label{beta2}
\beta_k = \frac{B_k}{D_k} = \frac{\| I-F_k \|_F^2 \tr(F_k^2 - F_k^3)}{\| I-F_k\|_F^2 \| F_k - F_k^2 \|_F^2 - (\tr(F_k^2 - F_k^3))^2}.    
\end{equation}
The formulas \eqref{alfa2} and \eqref{beta2} can be used to obtain useful bounds for $\alpha_k$ and $\beta_k$.
\begin{lem}\label{Lgranice}
 If $D_k\neq 0$ for $k\in\mathbb{N}_0$ and $\lambda_{i,k} <1$ for $i\in\{1,\dots,n\}$ then 
 \begin{equation}\label{granica1}
     \beta_k \geqslant \frac{1}{1+r(F_k)}, \qquad |\alpha_k| \leqslant r(F_k) \beta_k.
 \end{equation}
Additionally, if $r(F_k)<1$ we have the inequalities 
\begin{equation*}\label{granica2}
     \frac{1}{1+r(F_k)} \leqslant \beta_k \leqslant \frac{1}{(1-r(F_k))^2}, \qquad |\alpha_k| \leqslant \frac{r(F_k)}{(1-r(F_k))^2}.
 \end{equation*}
\end{lem}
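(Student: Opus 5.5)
We will work entirely with the eigenvalues of $F_k$. Since $F_k$ is real symmetric, all traces and Frobenius norms in \eqref{alfa2}--\eqref{beta2} are sums over $\lambda_i:=\lambda_{i,k}$. Put $w_i:=1-\lambda_i$; by hypothesis $w_i>0$, and we write $r=r(F_k)$. Then
$$\tr(I-F_k)=T:=\sum_i w_i>0,\qquad \tr(F_k^2-F_k^3)=S_2:=\sum_i w_i\lambda_i^2\ge 0,\qquad \tr(F_k^3-F_k^4)=S_3:=\sum_i w_i\lambda_i^3 .$$
Theorem~\ref{Osobine}(b), which applies since we are in the regime $D_{k-1}\neq 0$ of the Remark, gives two further identities. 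First, $\|I-F_k\|_F^2=\tr(I-F_k)$, i.e. $\sum_i w_i^2=T$. Second, $\|F_k\|_F^2=\tr(F_k)$, i.e. $\sum_i w_i\lambda_i=0$. Using the second, $\tr((I-F_k)(F_k-F_k^2))=\sum_i w_i^2\lambda_i=-S_2$. Hence
$$D_k=\Big(\sum_i w_i^2\Big)\Big(\sum_i w_i^2\lambda_i^2\Big)-\Big(\sum_i w_i^2\lambda_i\Big)^2,$$
which is a genuine Cauchy--Schwarz (Gram) determinant for the vectors $a=(w_i)$ and $b=(w_i\lambda_i)$. From \eqref{alfa2}--\eqref{beta2}, $\beta_k=TS_2/D_k$ and $\alpha_k=-TS_3/D_k$, with $D_k>0$.

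\emph{Bound on $\alpha_k$.} Since $w_i>0$, we have $|S_3|\le\sum_i w_i|\lambda_i|\lambda_i^2\le rS_2$. Dividing by $D_k>0$ gives $|\alpha_k|\le r\beta_k$ directly.

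\emph{Lower bound on $\beta_k$.} We need $TS_2(1+r)\ge D_k$. Since $D_k\le T\sum_i w_i^2\lambda_i^2$, it suffices to have $\sum_i w_i^2\lambda_i^2\le(1+r)\sum_i w_i\lambda_i^2$. This holds termwise because $w_i=1-\lambda_i\le 1+r$.

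\emph{Upper bound when $r<1$.} This is the step needing a real idea, namely a lower bound on $D_k$. I would use the Gram form of $D_k$:
$$D_k=\|a\|^2\min_{c\in\R}\|b-ca\|^2=T\,\min_c\sum_i w_i^2(\lambda_i-c)^2 .$$
If $r<1$ then $w_i\ge 1-r>0$, so $w_i^2\ge(1-r)w_i$. Therefore
$$\sum_i w_i^2(\lambda_i-c)^2\ge(1-r)\sum_i w_i(\lambda_i-c)^2\ge(1-r)\sum_i w_i\lambda_i^2=(1-r)S_2 .$$
The last inequality holds because the weighted mean $\sum_i w_i\lambda_i/\sum_i w_i$ equals $0$, by the identity $\tr(F_k)=\|F_k\|_F^2$; so $c=0$ minimizes the weighted variance. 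Hence $D_k\ge(1-r)TS_2$, which gives $\beta_k\le 1/(1-r)\le 1/(1-r)^2$. Combining this with $|\alpha_k|\le r\beta_k$ gives $|\alpha_k|\le r/(1-r)^2$.

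The main obstacle I anticipate is that the second identity needs $D_{k-1}\neq0$. For $k=0$ it is not available, so I would handle that case separately or invoke the Remark and Theorem~\ref{trivija} to reduce to it. The key trick is using Theorem~\ref{Osobine}(b) to turn $D_k$ into a weighted variance whose centering constant is exactly $0$.
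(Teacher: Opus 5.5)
Your proof is correct in the regime where the paper's own proof actually operates, namely $k\geqslant 1$ with $D_{k-1}\neq 0$, so that Theorem~\ref{Osobine}(b) and hence \eqref{alfa2}--\eqref{beta2} hold.

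\emph{First part: same as the paper.} The paper also bounds $|\tr(F_k^3-F_k^4)|\leqslant r(F_k)\tr(F_k^2-F_k^3)$. For $\beta_k\geqslant 1/(1+r(F_k))$ it writes $D_k$ as a double sum, discards the $-\lambda_{i,k}^2$ term and uses $1-\lambda_{j,k}\leqslant 1+r(F_k)$. That is exactly your chain $D_k\leqslant T\sum_i w_i^2\lambda_i^2\leqslant (1+r)TS_2$.

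\emph{Upper bound: a genuinely different route.} The paper returns to the unsimplified formulas \eqref{alfabeta1}. It proves $B_k=\frac12\sum_{i,j}w_iw_j(\lambda_i-\lambda_j)^2$ and $D_k=\frac12\sum_{i,j}w_i^2w_j^2(\lambda_i-\lambda_j)^2$, and reads $\beta_k$ as a weighted average of $1/(w_iw_j)$. That argument never uses Theorem~\ref{Osobine}(b), and without (b) the bound $(1-r)^{-2}$ is sharp: take $n=2$ with both eigenvalues close to $r$.

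You instead write $D_k$ as the Gram determinant $T\min_c\sum_i w_i^2(\lambda_i-c)^2$ and use the centering $\sum_i w_i\lambda_i=0$ supplied by Theorem~\ref{Osobine}(b). This gains a factor $1-r$, giving the stronger bounds $\beta_k\leqslant 1/(1-r)$ and $|\alpha_k|\leqslant r/(1-r)$. Since $|\alpha_k|\leqslant r\beta_k$ already needs (b), nothing is lost by using it. The paper's \eqref{Dkdouble} is just the Lagrange identity for your Gram determinant of $(w_i)$ and $(w_i\lambda_i)$.

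\emph{The case $k=0$.} Your proposed treatment cannot succeed. The Remark and Theorem~\ref{trivija} concern $D_k=0$ and do not provide $\|F_0\|_F^2=\tr(F_0)$. More to the point, the first part of the lemma is false at $k=0$.
\begin{itemize}
\item Take $F_0=\diag(0.5,0.4)$, realised e.g.\ by $A=\diag(\sqrt{0.5},\sqrt{0.6})$, $X_0=A^T$. The formulas \eqref{alfabeta1} give $\alpha_0=-3$ and $\beta_0=10/3$. So $|\alpha_0|=3>r(F_0)\beta_0=5/3$, and even $|\alpha_0|>r/(1-r)^2=2$.
\item For $F_0=\diag(-0.5,-0.4)$ one gets $\beta_0=1/2.1<1/(1+r(F_0))$.
\end{itemize}
Adding a third eigenvalue ($0.45$, resp.\ $-0.45$, so that $F_1\neq 0$) does not change either conclusion.

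So the lemma must be read with $k\in\mathbb{N}$ and $D_{k-1}\neq 0$. This is what the paper's proof tacitly assumes: it starts from ``$k\in\mathbb{N}$'' and relies on \eqref{alfa2}--\eqref{beta2}. Under that reading your proof is complete.
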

\begin{proof}
    First, suppose that $D_k\neq 0$ for some $k\in\mathbb{N}$ and that $\lambda_{i,k} <1$ for $i=1,\dots,n$. We have that $\beta_k$ must be non-negative since $\tr(F_k^2-F_k^3)=\sum_{i=1}^n \lambda_{i,k}^2 (1-\lambda_{i,k})$ is non-negative.
    From
    $$
    \begin{aligned}
        \left|-\tr(F_k^3-F_k^4)\right|& = |\tr(F_k^3-F_k^4)| = \left\vert \sum_{i=1}^n \lambda_{i,k}^3 (1-\lambda_{i,k}) \right\vert \\&\leqslant \sum_{i=1}^n |\lambda_{i,k}^3 |\cdot(1-\lambda_{i,k})
    \leqslant r(F_k) \sum_{i=1}^n \lambda_{i,k}^2 (1-\lambda_{i,k}) = r(F_k)\cdot \tr(F_k^2-F_k^3)
    \end{aligned}
    $$
    we obtain the estimate $|A_k| \leqslant r(F_k) B_k$, from which the inequality $|\alpha_k|\leqslant r(F_k)\beta_k$ immediately follows.
Next, let us write $D_k$ and $B_k$ as:
    $$
    \begin{aligned}
      D_k& = \| I-F_k\|_F^2 \|F_k-F_k^2\|_F^2 - (\tr(F_k^2 - F_k^3))^2 = \tr(I-F_k) \tr ( (F_k-F_k^2)^2) -  (\tr(F_k^2 - F_k^3))^2 \\
      & = \sum_{i=1}^n (1-\lambda_{i,k}) \sum_{j=1}^n \lambda_{j,k}^2 (1-\lambda_{j,k})^2  - \sum_{i=1}^n \lambda_{i,k}^2 (1-\lambda_{i,k}) \sum_{j=1}^n \lambda_{j,k}^2(1-\lambda_{j,k})  \\
      & = \sum_{i=1}^n \sum_{j=1}^n (1-\lambda_{i,k})(1-\lambda_{j,k})^2  \lambda_{j,k}^2 - \sum_{i=1}^n \sum_{j=1}^n (1-\lambda_{i,k})(1-\lambda_{j,k}) \lambda_{i,k}^2 \lambda_{j,k}^2 \\
      & = \sum_{i=1}^n \sum_{j=1}^n (1-\lambda_{i.k})(1-\lambda_{j,k}) \lambda_{j,k}^2 (1-\lambda_{j,k}-\lambda_{i,k}^2),\\[4pt] \medskip
      B_k& = \| I-F_k \|_F^2 \tr(F_k^2 - F_k^3) = \tr(I-F_k)\tr(F_k^2-F_k^3) \\
      & = \sum_{i=1}^n (1-\lambda_{i,k}) \sum_{j=1}^n \lambda_{j.k}^2 (1-\lambda_{j,k}) = \sum_{i=1}^n \sum_{j=1}^n (1-\lambda_{i,k})(1-\lambda_{j,k})\lambda_{j,k}^2. 
    \end{aligned}
    $$
    Since $D_k>0$ we have that 
    $$
    \begin{aligned}
    0 < D_k  & = \sum_{i=1}^n \sum_{j=1}^n (1-\lambda_{i.k})(1-\lambda_{j,k}) \lambda_{j,k}^2 (1-\lambda_{j,k}-\lambda_{i,k}^2) \leqslant \sum_{i=1}^n \sum_{j=1}^n (1-\lambda_{i.k})(1-\lambda_{j,k}) \lambda_{j,k}^2 (1-\lambda_{j,k}) \\
    & \leqslant (1+r(F_k)) \sum_{i=1}^n \sum_{j=1}^n (1-\lambda_{j,k})(1-\lambda_{i,k})\lambda_{j,k}^2 = (1+r(F_k)) B_k,
    \end{aligned}
    $$
    which in turn implies that $\beta_k \geqslant (1+r(F_k))^{-1}$. 

\medskip

    Now assume that $r(F_k)<1$. In order to show that $\beta_k \leqslant (1-r(F_k)^{-2}$, let us get back to the second formula in \eqref{alfabeta1} and rewrite $B_k$ and $D_k$ in a different form. First, we rewrite $B_k$ as
$$
\begin{aligned}
    B_k & = \| I-F_k\|_F^2 \tr (I-F_k^2) - \tr(I-F_k)\tr ( (I-F_k^2)(I-F_k))  \\
     & = \sum_{i=1}^n \sum_{j=1}^n \left[  (1-\lambda_{i,k}^2)(1-\lambda_{j,k})^2 - (1-\lambda_{i,k})(1-\lambda_{j,k})(1-\lambda_{j,k}^2)  \right] \\
     & = \sum_{i=1}^n \sum_{j=1}^n (1-\lambda_{i,k})(1-\lambda_{j,k}) \left[ (1+\lambda_{i,k}) (1-\lambda_{j,k}) - (1-\lambda_{j,k}^2) \right] \\
     & = \sum_{i=1}^n \sum_{j=1}^n (1-\lambda_{i,k})(1-\lambda_{j,k})^2 (1+\lambda_{i,k} -1 - \lambda_{j,k})  = \sum_{i=1}^n \sum_{j=1}^n (1-\lambda_{i,k})(1-\lambda_{j,k})^2 (\lambda_{i,k} - \lambda_{j,k}).
\end{aligned}
$$
Denote by $t_{i,j}$ the $(i,j)$-th element of the sum above. Then
$
B_k = \sum_{1\leqslant i < j \leqslant n} (t_{i,j}+t_{j,i}).
$
As 
$$
t_{j,i} = (1-\lambda_{j,k})(1-\lambda_{i,k})^2 (\lambda_{j,k} - \lambda_{i,k})  = - (1-\lambda_{i,k})^2 (1-\lambda_{j,k}) ( \lambda_{i.k} - \lambda_{j,k}),
$$
it stands that 
$$
\begin{aligned} 
t_{i,j} + t_{j,i} & = (1-\lambda_{i,k})(1-\lambda_{j,k})^2 (\lambda_{i,k} - \lambda_{j,k}) - (1-\lambda_{i,k})^2 (1-\lambda_{j,k}) ( \lambda_{i.k} - \lambda_{j,k}) \\
& = (1-\lambda_{i,k})(1-\lambda_{j,k}) ( \lambda_{i,k} - \lambda_{j,k}) (1- \lambda_{j,k} - (1- \lambda_{i,k})) \\
& = (1-\lambda_{i,k})(1-\lambda_{j,k})  (\lambda_{i,k}-\lambda_{j,k})^2 .
\end{aligned}
$$
The previous calculations directly imply
\bb\label{Bkdouble}
\aligned
B_k & = \sum_{1\leqslant i < j \leqslant n } (1-\lambda_{i,k})(1-\lambda_{j,k})  (\lambda_{i,k}-\lambda_{j,k})^2 = \frac{1}{2} \sum_{i=1}^n \sum_{j=1}^n (1-\lambda_{i,k})(1-\lambda_{j,k})  (\lambda_{i,k}-\lambda_{j,k})^2.
\endaligned
\ee
Finally, one can express $D_k$ as:
$$
\begin{aligned}
    D_k  &= \| I- F_k\|_F^2\cdot \| I- F_k^2\|_F^2 - \left( \tr ( (I-F_k^2)(I-F_k)) \right)^2 \\
    & = \sum_{i=1}^n \sum_{j=1}^n \left[  (1-\lambda_{i,k})^2  (1-\lambda_{j,k}^2)^2 - (1-\lambda_{i,k})(1-\lambda_{i,k}^2) (1-\lambda_{j,k})(1-\lambda_{j,k}^2) \right] \\
    & = \sum_{i=1}^n \sum_{j=1}^n (1-\lambda_{i,k})(1-\lambda_{j,k}^2) \left[ (1-\lambda_{i,k})(1-\lambda_{j,k}^2) - (1-\lambda_{i,k}^2)(1-\lambda_{j,k})  \right] \\
    & = \sum_{i=1}^n \sum_{j=1}^n (1-\lambda_{i,k})(1-\lambda_{j,k}^2)\left[ (1-\lambda_{i,k})(1+\lambda_{j,k})(1-\lambda_{j,k}) - (1-\lambda_{i,k})(1+\lambda_{i,k})(1-\lambda_{j,k}) \right] \\
    & = \sum_{i=1}^n \sum_{j=1}^n (1-\lambda_{i,k})(1-\lambda_{j,k}^2)(1-\lambda_{i,k})(1-\lambda_{j,k}) ( 1 + \lambda_{j,k} -1 - \lambda_{i,k})  \\&=\sum_{i=1}^n \sum_{j=1}^n (1-\lambda_{i,k})^2(1-\lambda_{j,k}^2)(1+\lambda_{j,k})(\lambda_{j,k}- \lambda_{i,k}).
\end{aligned}
$$
Denote by $u_{i,j}$ the $(i,j)$-th summand in the sum above. Then
$D_k  = \sum_{1\leqslant i < j \leqslant n} ( u_{i,j} + u_{j,i}).$
Since 
$$
\begin{aligned}
u_{j,i} &=(1-\lambda_{j,k})^2(1-\lambda_{i,k}^2)(1+\lambda_{i,k})(\lambda_{i,k}- \lambda_{j,k}) = - (1-\lambda_{i,k})^2(1-\lambda_{j,k})^2 (1+\lambda_{i,k})(\lambda_{j,k}-\lambda_{i,k}),
\end{aligned}
$$
it follows that
$$
\begin{aligned}
u_{i,j} + u_{j,i} & = (1-\lambda_{i,k})^2(1-\lambda_{j,k}^2)(1+\lambda_{j,k})(\lambda_{j,k}- \lambda_{i,k})- (1-\lambda_{i,k})^2(1-\lambda_{j,k})^2 (1+\lambda_{i,k})(\lambda_{j,k}-\lambda_{i,k})\\
& = (1-\lambda_{i,k})^2 (1-\lambda_{j,k})^2 (\lambda_{j,k}-\lambda_{i,k} (1+\lambda_{j,k} - 1 - \lambda_{i,k}) \\
& = (1-\lambda_{i,k})^2 (1-\lambda_{j,k})^2 (\lambda_{j,k}-\lambda_{i,k})^2.
\end{aligned}
$$
and therefore
\bb \label{Dkdouble}
\aligned
D_k & =\! \!\sum_{1\leqslant i < j \leqslant n} (1-\lambda_{i,k})^2 (1-\lambda_{j,k})^2 (\lambda_{j,k}-\lambda_{i,k})^2=\frac{1}{2} \sum_{i=1}^n \sum_{j=1}^n (1-\lambda_{i,k})^2 (1-\lambda_{j,k})^2 \left(\lambda_{j,k}-\lambda_{i,k} \right)^2.
\endaligned
\ee
Combining the formulas \eqref{Bkdouble} and \eqref{Dkdouble} we obtain
\begin{align}\beta_k & = \frac{\displaystyle\sum_{1\leqslant i < j \leqslant n } (1-\lambda_{i,k})(1-\lambda_{j,k})  (\lambda_{i,k}-\lambda_{j,k})^2}{\displaystyle\sum_{1\leqslant i < j \leqslant n} (1-\lambda_{i,k})^2 (1-\lambda_{j,k})^2 (\lambda_{j,k}-\lambda_{i,k})^2} =\frac{\displaystyle\sum_{i=1}^n \sum_{j=1}^n (1-\lambda_{i,k})(1-\lambda_{j,k})  (\lambda_{i,k}-\lambda_{j,k})^2}{\displaystyle\sum_{i=1}^n \sum_{j=1}^n (1-\lambda_{i,k})^2 (1-\lambda_{j,k})^2 (\lambda_{j,k}-\lambda_{i,k})^2}.\label{betadouble} 
\end{align}
Finally, from \eqref{betadouble} we see that
$$
\beta_k  = \frac{\displaystyle\sum_{i=1}^n \sum_{j=1}^n (1-\lambda_{i,k})(1-\lambda_{j,k})  (\lambda_{i,k}-\lambda_{j,k})^2}{\displaystyle\sum_{i=1}^n \sum_{j=1}^n (1-\lambda_{i,k})^2 (1-\lambda_{j,k})^2 (\lambda_{j,k}-\lambda_{i,k})^2} = \frac{\displaystyle\sum_{i=1}^n \sum_{j=1}^n \dfrac{(1-\lambda_{i,k})^2(1-\lambda_{j,k})^2  (\lambda_{i,k}-\lambda_{j,k})^2} {(1-\lambda_{i,k})(1-\lambda_{j,k})}}{\displaystyle\sum_{i=1}^n \sum_{j=1}^n (1-\lambda_{i,k})^2 (1-\lambda_{j,k})^2 (\lambda_{j,k}-\lambda_{i,k})^2}.
$$
It allows us to form the following bound
$$
\beta_k \leqslant \max\left\{ \frac{1}{(1-\lambda_{i,k})(1-\lambda_{j,k})}:\lambda_{i,k},\lambda_{j,k} \in \sigma(F_k)\right\} 
$$
which directly implies $\beta_k \leqslant (1-r(F_k))^{-2}$.
\end{proof}

\begin{thm}\label{PrvaT} 
If the method \textup{\textbf{SSHP2}} converges, then $$\displaystyle\lim_{k\to \infty} \alpha_k=0,\qquad\displaystyle\lim_{k\to \infty} \beta_k = 1.$$
\end{thm}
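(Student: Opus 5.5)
Convergence of \textbf{SSHP2} means $F_k\to 0$, hence $r(F_k)\to 0$. All eigenvalues are then eventually small in absolute value, in particular $\lambda_{i,k}<1$ and $r(F_k)<1$ for all $k\geqslant k_1$. The plan is to feed this into Lemma~\ref{Lgranice} and squeeze $\beta_k$ between two bounds that both tend to $1$, and then control $\alpha_k$ through the bound $|\alpha_k|\leqslant r(F_k)\beta_k$.

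First we dispose of the degenerate case. If $D_{k_0}=0$ for some $k_0$, then by Theorem~\ref{trivija} and Algorithm~\ref{optcoefit} we have $(\alpha_k,\beta_k)=(0,1)$ for all $k\geqslant k_0$, and the claim is immediate. (If one insists that the degenerate case is handled by the fixed projection $I-P$, convergence forces $P=I$, i.e.\ $F_{k_0}=0$. Then the iterations are trivially stationary and the coefficients are set to $(0,1)$.) So we may assume, as in the Remark, that $D_k\neq 0$ for all $k$.

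In the main case, fix $k\geqslant k_1$. The second part of Lemma~\ref{Lgranice} gives
\[
\frac{1}{1+r(F_k)}\leqslant \beta_k\leqslant \frac{1}{(1-r(F_k))^2},\qquad |\alpha_k|\leqslant \frac{r(F_k)}{(1-r(F_k))^2}.
\]
Since $r(F_k)\leqslant \|F_k\|_F\to 0$, both bounds on $\beta_k$ tend to $1$, so $\beta_k\to 1$. The bound on $\alpha_k$ tends to $0$, so $\alpha_k\to 0$. As a consistency check, this agrees with Theorem~\ref{Osobine2}(b), which gives $\alpha_k+\beta_k\to 1$.

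The only delicate point is justifying the hypotheses of Lemma~\ref{Lgranice} from mere convergence. We need $\lambda_{i,k}<1$ and $r(F_k)<1$ eventually. Because $F_k$ is symmetric, $r(F_k)=\|F_k\|_2\leqslant\|F_k\|_F$, and convergence of the method means $\|F_k\|_F\to 0$, so both hypotheses hold for large $k$. Beyond this, the argument is a direct squeeze.
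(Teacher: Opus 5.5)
Your proof is correct and follows the paper's argument. You dispose of the degenerate case $D_{k_0}=0$ via the fallback $(\alpha_k,\beta_k)=(0,1)$. In the main case you use $r(F_k)\le\|F_k\|_F\to 0$ to fall eventually under the hypotheses of Lemma~\ref{Lgranice} (symmetry of $F_k$ then also gives $\lambda_{i,k}<1$, a detail the paper leaves implicit), and you squeeze $\beta_k\to 1$ and $\alpha_k\to 0$.
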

\begin{proof}
If the methods \textbf{SSHP2} converges, then $\displaystyle\lim_{k\to \infty} \|F_k\|_F=0$, so it follows that $r(F_k)<1$ for sufficiently large $k\in \mathbb{N}$.  Notice that if $D_k=0$ for some $k\in\mathbb{N}_0$ we will trivially have that $\lim\limits_{k\to\infty}\alpha_k= 0$ and $\lim\limits_{k\to\infty}\beta_k= 1$. 
We proceed in the case when $D_k\neq 0$ for $k\in\mathbb{N}_0$. 
Applying Lemma \ref{Lgranice} and $\lim\limits_{k\to\infty}r(F_k)=0$ we get that $\lim\limits_{k\to \infty} \beta_k=1$ and $\lim\limits_{k\to\infty} \alpha_k=0$. 
\end{proof}

We now formulate the main result regarding the convergence of \textbf{SSHP2} method.

\begin{thm}\label{Glavna1}
    If $1\not\in \sigma(F_k)$ for $k\in\mathbb{N}_0$, and  there exists a non-negative integer $l$ and a positive constant $m$ such that $\beta_k>m$ for all $k\geq l$, the iterative process \textup{\textbf{SSHP2}} converges.
\end{thm}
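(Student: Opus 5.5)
The plan is to show that every eigenvalue sequence $(\lambda_{i,k})_{k}$ converges to $0$. Since all $F_k$ are real symmetric, this gives $\|F_k\|_F\to0$, i.e.\ convergence of \textbf{SSHP2}. By Theorem~\ref{trivija} and the Remark after Theorem~\ref{Osobine2}, we may assume $D_k\neq0$ for all $k$.

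\emph{Step 1: successive iterates get closer.} Recall that $\|F_{k+1}\|_F^2=G(\alpha_k,\beta_k)$ is the minimum of the convex quadratic $G$. Moreover, $G(1,0)=\|F_k\|_F^2$, because the choice $(\alpha,\beta)=(1,0)$ in \eqref{smena2} reproduces $F_k$. For a least-squares minimizer, the excess of $G$ at another point equals the squared distance of the corresponding residuals. This gives
\[
\|F_k\|_F^2-\|F_{k+1}\|_F^2=\|F_{k+1}-F_k\|_F^2 .
\]
Alternatively, the same identity follows directly from the orthogonality relations in Theorem~\ref{Osobine}(a). Telescoping and using Theorem~\ref{Osobine2}(a) shows $\sum_k\|F_{k+1}-F_k\|_F^2<\infty$, so $\|F_{k+1}-F_k\|_F\to0$. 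Since $F_{k+1}$ is a polynomial in $F_k$, both matrices share eigenvectors. Eigenvalue-wise, \eqref{vezab0b1} gives
\[
\lambda_{i,k+1}-\lambda_{i,k}=(1-\lambda_{i,k})\bigl(1-\alpha_k-\beta_k-\beta_k\lambda_{i,k}\bigr)\longrightarrow 0
\]
for every $i$.

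\emph{Step 2: limit points lie in $\{0,1\}$.} All eigenvalues are bounded, since $|\lambda_{i,k}|\le\|F_k\|_F\le\|F_0\|_F$. By Theorem~\ref{Osobine2}(b), $\alpha_k+\beta_k\to1$. Together with Step~1 this yields $(1-\lambda_{i,k})\beta_k\lambda_{i,k}\to0$. Since $\beta_k>m>0$ for $k\ge l$, we get $\lambda_{i,k}(1-\lambda_{i,k})\to0$, so every limit point of $(\lambda_{i,k})_k$ lies in $\{0,1\}$. The set of limit points of a bounded sequence with increments tending to $0$ is connected. Hence each $(\lambda_{i,k})_k$ converges, either to $0$ or to $1$.

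\emph{Step 3: excluding the limit $1$ (main obstacle).} From \eqref{vezab0b1} we have the exact relation
\[
1-\lambda_{i,k+1}=(1-\lambda_{i,k})\bigl(\alpha_k+\beta_k+\beta_k\lambda_{i,k}\bigr).
\]
Suppose $\lambda_{i,k}\to1$. Then for large $k$ the factor satisfies $\alpha_k+\beta_k+\beta_k\lambda_{i,k}\ge 1+m/2$, using again $\alpha_k+\beta_k\to1$ and $\beta_k>m$. Because $1\notin\sigma(F_k)$, the quantity $1-\lambda_{i,k}$ is nonzero. It is therefore multiplied by at least $1+m/2$ in absolute value at each step, so it grows geometrically, which contradicts $1-\lambda_{i,k}\to0$. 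Consequently every eigenvalue tends to $0$, and $\|F_k\|_F^2=\sum_i\lambda_{i,k}^2\to0$.

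The delicate points are the bookkeeping in Step~1, namely verifying the Pythagorean identity cleanly via Theorem~\ref{Osobine}(a), and the uniformity in Step~3. It is the hypothesis $\beta_k>m$ that makes the repulsion factor uniformly larger than $1$, and it is $1\notin\sigma(F_k)$ that prevents an eigenvalue from sitting exactly at the fixed point $1$.
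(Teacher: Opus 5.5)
Your proof is correct. Its outer frame matches the paper's: reduce to $D_k\neq 0$ via Theorem~\ref{trivija}, show each eigenvalue sequence tends to $0$ or $1$, then exclude $1$ using the factor $\alpha_k+\beta_k+\beta_k\lambda_{i,k}>1$. The middle step, however, is genuinely different.

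The paper works algebraically with the closed forms \eqref{Ak2}--\eqref{Dk2}:
\begin{itemize}
\item it proves $A_k+B_k-D_k=(\tr(F_k^2-F_k^3))^2$;
\item from $\alpha_k+\beta_k\to1$ it concludes $\tr(F_k^2-F_k^3)\to0$, hence $B_k\to0$;
\item it uses $\beta_k>m$ to force $D_k\to0$;
\item since $\|I-F_k\|_F$ is bounded away from zero, this gives $\|F_k-F_k^2\|_F\to0$.
\end{itemize}

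You bypass all of that. You use the least-squares orthogonality of $F_{k+1}$ to $I-F_k$ and $I-F_k^2$, together with $G(1,0)=\|F_k\|_F^2$, to obtain the Pythagorean identity $\|F_k\|_F^2-\|F_{k+1}\|_F^2=\|F_{k+1}-F_k\|_F^2$. You then read off $\lambda_{i,k}(1-\lambda_{i,k})\to0$ directly from the eigenvalue recursion, with $\beta_k>m$ used only to cancel $\beta_k$.

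What your route buys is more than economy. The paper passes from $\lambda_{i,k}^2(1-\lambda_{i,k})^2\to0$ straight to ``$\lim_{k}\lambda_{i,k}$ exists and equals $0$ or $1$.'' That only places the limit points in $\{0,1\}$ and does not rule out oscillation between neighbourhoods of $0$ and $1$. Your bound $\sum_k\|F_{k+1}-F_k\|_F^2\le\|F_0\|_F^2$ makes the increments vanish, so the limit set is connected and hence a single point. This is exactly what closes that step, and it also justifies the ``$F_k$ converges to a projection'' conclusion that the paper later reuses in Theorem~\ref{Stabilnost}.

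The paper's computation gives, in exchange, the explicit information that the Gram determinant $D_k$ of the $2\times2$ system degenerates along the iteration. In your final step, a ratio strictly greater than $1$ already suffices, as in the paper; your uniform factor $1+m/2$ is valid but stronger than needed.
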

\begin{proof}
Let us notice that if $\lambda_{i,k_0}=1$ for some $i\in\{1,\dots, n\}$ and $k_0\in\mathbb{N}$, then $\lambda_{i,k}=1$ for all $k\geqslant k_0$. So the condition that $1\not \in \sigma(F_k)$ for all $k\in\mathbb{N}_0$ is indeed necessary. Without loss of generality, we proceed under the assumption that $D_k\neq 0$ for all $k\in\mathbb{N}_0$ as this case was covered in Theorem \ref{trivija}.

\medskip

We first show that under these assumptions, the sequence $(F_k)_{k=0}^\infty$ converges to a projection matrix. Then we will show that the only possible limit of the sequence $(F_k)_{k=0}^\infty$ is the zero matrix.

\medskip

Recall that $A_k$, $B_k$ and $D_k$ denote the numerators of $\alpha_k$ and $\beta_k$, and their common denominator from \eqref{alfabeta1}. The numerator of $\alpha_k+\beta_k-1$ (which is equal to $A_k+B_k-D_k$) can be written using formulas \eqref{Ak2},\,\eqref{Bk2} and \eqref{Dk2} as:
$$
\begin{aligned}
    A_k+B_k - D_k & = \|I-F_k\|_F^2 (\tr(F_k^2-F_k^3) - \tr(F_k^3 - F_k^4)) - \|I-F_k^2\|_F^2 \|F_k-F_k^2\|_F^2  + (\tr(F_k^2-F_k^3))^2 \\
    & = \| I -F_k\|_F^2 \tr(F_k^2 - 2F_k^3 +F_k^4) - \|I-F_k^2\|_F^2 \|F_k-F_k^2\|_F^2  + (\tr(F_k^2-F_k^3))^2\\& = \left( \tr(F_k^2- F_k^3) \right)^2.
\end{aligned}
$$
By part \textbf{(b)} of Theorem \ref{Osobine2}, it follows that $\lim\limits_{k\to\infty}(A_k+B_k-D_k)= 0$, so 
\begin{equation}\label{tragFk}
    \displaystyle\lim_{k\to \infty} \tr(F_k^2-F_k^3) = 0.
\end{equation}
In \eqref{beta2} from \eqref{tragFk}, we see that the numerator of $\beta_k$ always converges to $0$ as $k\to \infty$. Since all values of $\beta_k$ are bounded from bellow by a positive number all for $k\geq l$, it follows that $D_k$ converges to $0$ as well.  This implies that 
$\displaystyle\lim_{k\to \infty} \| I-F_k \|_F^2\cdot\| F_k-F_k^2\|_F^2 =0.$ 
Since $(\| I-F_k \|_F)_{k=0}^\infty$ is a bounded non-decreasing sequence and non-zero by construction, it follows that 
$\displaystyle\lim_{k\to \infty} \| F_k-F_k^2\|_F^2 =0,$
which allows us to conclude that $F_k$ will converge to a projection matrix $P$. Indeed, $\displaystyle\lim_{k\to \infty} \| F_k-F_k^2\|_F^2 =0$ is equivalent to
$$
\lim_{k\to\infty}\sum_{i=1}^n\lambda_{i,k}^2(1-\lambda_{i,k})^2=0,$$
and since all summands are non-negative it follows that each of the summands converges to $0$, so $\displaystyle\lim_{k\to \infty} \lambda_{i,k}^2(1-\lambda_{i,k})^2=0$ for $i=1,\dots,n$. In turn it follows that $\displaystyle\lim_{k\to\infty} \lambda_{i,k}$ exists, and it is equal to $0$ or $1$. 

\medskip

Suppose now that the projection matrix $P$ that $(F_k)_{k=0}^\infty$ converges to is not the zero matrix, so there exists an index $i\in \lbrace 1,\dots, n\rbrace$ such that $\displaystyle\lim_{k\to \infty} \lambda_{i,k} =1$.  
Subtracting $1$ from both sides of \eqref{vezab0b1} gives us 
$$ 
\begin{aligned} 
\lambda_{i,k+1} -1 & = -\alpha_k - \beta_k + \alpha_k \lambda_{i,k} + \beta \lambda_{i,k}^2  \\&=\alpha_k (\lambda_{i,k}-1) + \beta_k(\lambda_{i,k}^2 - 1) = (\lambda_{i,k}-1) ( \alpha_k + \beta_k (\lambda_{i,k}+1)).
\end{aligned}
$$
Since $1\not \in \sigma(F_k)$, this equality can be rewritten as 
\begin{equation*}\label{kolicnik}
     \frac{\lambda_{i,k+1}-1}{\lambda_{i,k}-1} = \alpha_k + \beta_k (\lambda_{i,k}+1) = \alpha_k + \beta_k + \beta_k\lambda_{i,k}.
\end{equation*}
Let $\varepsilon >0$ be an arbitrary real number. Since  $\displaystyle\lim_{k\to \infty} \lambda_{i,k}=1$ and all accumulation points of $\beta_k$ are positive, there exists a positive integer $k_0$ such that
$
1- \varepsilon < \lambda_{i,k} < 1+ \varepsilon$ and $\beta_k >0
$
for all $k\geqslant k_0$. For all such $\alpha_k$, $\beta_k$ and $\lambda_{i,k}$, using part \textbf{(d)} from Theorem \ref{Osobine} we get
$\alpha_k + \beta_k + \beta_k\lambda_{i,k} > \alpha_k + \beta_k \geqslant 1$. 
So it follows that 
\begin{equation}\label{kolicnik2}
     \frac{\lambda_{i,k+1}-1}{\lambda_{i,k}-1} >1,\qquad k\geqslant k_0.
\end{equation} 
If $\lambda_{i,k}>1$ then the inequality \eqref{kolicnik2} implies that $\lambda_{i,k+1} > \lambda_{i,k} >1$. Now replacing $k$ in \eqref{kolicnik2} by $k+1,k+2, \ldots$, we see that the sequence $(\lambda_{i,k})_{k \geq k_0}$ is an increasing sequence, so $\displaystyle\lim_{k\to \infty} \lambda_{i,k} \geqslant \lambda_{i,k_0} >1$. This is contradiction with the assumption that $\displaystyle\lim\limits_{k\to\infty} \lambda_{i,k} =1$. 
Similarly, if $\lambda_{i,k}<1$ then the inequality \eqref{kolicnik2} implies that $\lambda_{i,k+1} < \lambda_{i,k} <1$. In this case replacing $k$ in \eqref{kolicnik2} by $k+1,k+2, \ldots$, leads us to the conclusion that $(\lambda_{i,k})_{k= k_0}^\infty$ is a decreasing sequence, so $\displaystyle\lim_{k\to \infty} \lambda_{i,k} \leqslant \lambda_{i,k_0} <1$, contradicting the assumption that $\displaystyle\lim_{k\to \infty} \lambda_{i,k} =1$.

\medskip

Therefore $P$ must be a zero matrix, which completes the proof.
\end{proof}

The following result follows directly from Theorem \ref{Glavna1}.
\begin{cor}\label{DrugaT} 
If $\displaystyle\lim_{k\to \infty} \alpha_k=0$,  $\displaystyle\lim_{k\to \infty} \beta_k = 1$ and $1\not \in \sigma(F_k)$ for all $k\in \mathbb{N}_0$ then the method \textup{\textbf{SSHP2}} converges.
\end{cor}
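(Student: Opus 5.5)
The plan is to reduce Corollary \ref{DrugaT} directly to Theorem \ref{Glavna1} by checking its two hypotheses. The spectral condition $1\not\in\sigma(F_k)$ for all $k\in\mathbb{N}_0$ is assumed verbatim. It therefore remains to produce a non-negative integer $l$ and a constant $m>0$ such that $\beta_k>m$ for all $k\geqslant l$.

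For this we use only the hypothesis $\lim_{k\to\infty}\beta_k=1$. Take $\varepsilon=\tfrac12$ in the definition of the limit. This gives an index $l$ with $|\beta_k-1|<\tfrac12$, and hence $\beta_k>\tfrac12$, for every $k\geqslant l$. Setting $m=\tfrac12$ verifies the second hypothesis of Theorem \ref{Glavna1}, which then yields convergence of \textbf{SSHP2}. Any $m\in(0,1)$ would work equally well.

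The hypothesis $\lim_{k\to\infty}\alpha_k=0$ is not needed for this reduction. It enters only to make the statement a converse of Theorem \ref{PrvaT}, so that together they characterize convergence through the limits of the coefficients.

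A point to check is the degenerate case where $D_{k_0}=0$ for some $k_0$. There Algorithm \ref{optcoefit} sets $\beta_k=1$, and the proof of Theorem \ref{Glavna1} defers this case to Theorem \ref{trivija}. Under $1\not\in\sigma(F_k)$, Theorem \ref{trivija} already gives convergence, so the argument covers this case as well. I expect no step to be a real obstacle: the work is carried by Theorem \ref{Glavna1}, and the only care needed is to extract a uniform positive lower bound from the limit of $\beta_k$.
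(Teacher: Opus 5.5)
Your proposal is correct and takes the same route as the paper, which states only that the corollary follows directly from Theorem \ref{Glavna1}. Your explicit extraction of the eventual bound $\beta_k>\tfrac12$ from $\lim_{k\to\infty}\beta_k=1$ is precisely the step the paper leaves implicit, and you are right that the hypothesis $\lim_{k\to\infty}\alpha_k=0$ plays no role in this direction.
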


Combining the conclusion from Theorem \ref{PrvaT} and Corollary \ref{DrugaT}, we get the next result.
\begin{thm}\label{OsnovnaT}
The iterative methods \textup{\textbf{SSHP2}} converges if and only if $1\not\in \sigma(F_k)$ for all $k\in \mathbb{N}_0$, $\displaystyle\lim_{k\to \infty} \alpha_k=0$ and $\displaystyle\lim_{k\to \infty} \beta_k=1$.     
\end{thm}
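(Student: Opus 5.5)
The plan is to prove the two implications separately, reducing each to results already established in this section.

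\emph{Necessity.} Assume \textbf{SSHP2} converges, i.e. $F_k\to 0$. Theorem \ref{PrvaT} then gives $\lim_{k\to\infty}\alpha_k=0$ and $\lim_{k\to\infty}\beta_k=1$ directly. It remains to show $1\notin\sigma(F_k)$ for every $k$. I will use the observation from the proof of Theorem \ref{Glavna1}. By \eqref{vezab0b1},
\[
\lambda_{i,k+1}-1=(\lambda_{i,k}-1)\bigl(\alpha_k+\beta_k(\lambda_{i,k}+1)\bigr),
\]
so $\lambda_{i,k_0}=1$ forces $\lambda_{i,k}=1$ for all $k\ge k_0$. This holds whether the coefficients come from \eqref{koef} or are the fallback values $(0,1)$. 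Hence $r(F_k)\ge 1$ for all $k\ge k_0$. Since $F_k$ is symmetric, $\|F_k\|_F\ge r(F_k)\ge 1$, which contradicts $\|F_k\|_F\to 0$.

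\emph{Sufficiency.} Assume $1\notin\sigma(F_k)$ for all $k$, $\alpha_k\to 0$ and $\beta_k\to 1$. Then this is exactly Corollary \ref{DrugaT}. To make it self-contained, I will reduce it to Theorem \ref{Glavna1}. Since $\beta_k\to1$, there is $l$ with $\beta_k>1/2$ for all $k\ge l$, so the hypothesis of Theorem \ref{Glavna1} holds with $m=1/2$. Together with $1\notin\sigma(F_k)$, that theorem gives convergence. The case $D_{k_0}=0$ for some $k_0$ is handled by Theorem \ref{trivija}, which again yields convergence under $1\notin\sigma(F_k)$.

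The only delicate point is in the necessity direction. One must check that the invariance of an eigenvalue equal to $1$ holds for every iteration, including those where Algorithm \ref{optcoefit} uses the fallback coefficients. One must also pass correctly from the Frobenius norm to the spectral radius, which is where the symmetry of $F_k$ is used. Everything else is a direct combination of Theorem \ref{PrvaT}, Corollary \ref{DrugaT} and Theorem \ref{Glavna1}.
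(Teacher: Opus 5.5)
Your proposal is correct and follows the paper's own route. Necessity comes from Theorem \ref{PrvaT} plus the invariance of an eigenvalue equal to $1$, which the paper notes at the start of the proof of Theorem \ref{Glavna1}; sufficiency is Corollary \ref{DrugaT}, i.e.\ Theorem \ref{Glavna1} with $m=1/2$, and you usefully make explicit the necessity of $1\notin\sigma(F_k)$, left implicit by the paper, although $\|F_k\|_F\ge r(F_k)$ holds for every square matrix, so symmetry is not needed there.
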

Let us provide some auxiliary results about sufficient conditions under which \textup{\textbf{SSHP2}} converges. 
\begin{thm}\label{PomocnaTKonv}
    If $\lambda_{i,0}<1$ for $i \in \{1,2,\ldots,n\}$, and 
    \begin{equation}\label{uslovBkRk}
        \beta_k \leqslant \frac{1}{r(F_k)}
    \end{equation} for all $k\in \mathbb{N}_0$, then the iterative method \textup{\textbf{SSHP2}} converges.
\end{thm}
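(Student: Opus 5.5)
The plan is to reduce everything to Theorem~\ref{Glavna1}. That theorem needs two hypotheses: $1\not\in\sigma(F_k)$ for all $k$, and a uniform positive lower bound $\beta_k>m$ for all large $k$. Both will come from an induction showing that every eigenvalue stays strictly below $1$, that is, $\lambda_{i,k}<1$ for all $i$ and $k$. The case where $D_{k_0}=0$ for some $k_0$ is dispatched by Theorem~\ref{trivija}: once the induction shows $1\not\in\sigma(F_k)$ up to that point, $F_{k_0}=I-P$ forces $P=I$ and hence $F_{k_0}=0$. So I will assume $D_k\neq 0$ for all $k$ throughout.

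\textbf{Inductive step.} Suppose $\lambda_{i,k}<1$ for all $i$. The computation in the proof of Theorem~\ref{Glavna1}, derived from \eqref{vezab0b1}, gives
$$\lambda_{i,k+1}-1=(\lambda_{i,k}-1)\bigl(\alpha_k+\beta_k+\beta_k\lambda_{i,k}\bigr).$$
Since $\lambda_{i,k}-1<0$, it suffices to show that $\alpha_k+\beta_k+\beta_k\lambda_{i,k}>0$.
\begin{itemize}
\item By part \textbf{(b)} of Theorem~\ref{Osobine2}, or directly from part \textbf{(d)} of Theorem~\ref{Osobine} together with monotonicity of $\|F_k\|_F$, we have $\alpha_k+\beta_k\geqslant 1$.
\item Lemma~\ref{Lgranice} applies because $\lambda_{i,k}<1$, so $\beta_k\geqslant 0$. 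Then $\lambda_{i,k}\geqslant -r(F_k)$ and hypothesis \eqref{uslovBkRk} give $\beta_k\lambda_{i,k}\geqslant -\beta_k r(F_k)\geqslant -1$.
\end{itemize}
Adding the two gives $\alpha_k+\beta_k+\beta_k\lambda_{i,k}\geqslant 0$, and hence $\lambda_{i,k+1}\leqslant 1$. This yields both $\lambda_{i,k}<1$ for all $k$ and $1\not\in\sigma(F_k)$.

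\textbf{Main obstacle: strictness.} The step I expect to be hardest is the equality case, where the factor vanishes and $\lambda_{i,k+1}=1$. Equality requires simultaneously $\alpha_k+\beta_k=1$, $\lambda_{i,k}=-r(F_k)$, and $\beta_k r(F_k)=1$. My first attempt is to rule this out via part \textbf{(d)} of Theorem~\ref{Osobine}: $\alpha_k+\beta_k=1$ means $\|F_{k+1}\|_F=\|F_k\|_F$, so the pair $(\alpha_k,\beta_k)=(1,0)$, which leaves $F_{k+1}=F_k$, is also a minimizer of the strictly convex $G$ (strict because $D_k>0$). Uniqueness of the minimizer then forces $\beta_k=0$, contradicting $\beta_k r(F_k)=1$. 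If this argument has a gap, the fallback is to read \eqref{uslovBkRk} as effectively strict, or to note that the boundary case only produces $\lambda=1$, which Theorem~\ref{trivija} already handles.

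\textbf{Uniform lower bound and conclusion.} Since $\|F_k\|_F$ is non-increasing (Theorem~\ref{Osobine2}\textbf{(a)}) and $r(F_k)\leqslant\|F_k\|_F$, we get $r(F_k)\leqslant\|F_0\|_F$ for all $k$. Lemma~\ref{Lgranice} then gives
$$\beta_k\geqslant\frac{1}{1+r(F_k)}\geqslant\frac{1}{1+\|F_0\|_F}=:m>0$$
for every $k$. With $1\not\in\sigma(F_k)$ and $\beta_k>m/2$ for all $k$, Theorem~\ref{Glavna1} (with $l=0$) gives convergence of \textbf{SSHP2}.
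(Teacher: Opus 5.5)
Your argument is the paper's: induction on $\lambda_{i,k}<1$ through $\lambda_{i,k+1}-1=(\lambda_{i,k}-1)(\alpha_k+\beta_k+\beta_k\lambda_{i,k})$, using $\alpha_k+\beta_k\geqslant 1$ and $\beta_k\lambda_{i,k}\geqslant -1$, and then Theorem~\ref{Glavna1}. You are more careful than the paper in two places. First, the paper's proof goes from a nonnegative factor straight to the strict inequality $\lambda_{i,k+1}<1$. Second, it never produces the uniform bound $\beta_k>m$ that Theorem~\ref{Glavna1} requires; it only shows $\beta_k\geqslant 0$. Your uniqueness-of-minimizer argument fixes the first point. (Alternatively, $\alpha_k+\beta_k-1=(\tr(F_k^2-F_k^3))^2/D_k>0$ unless $F_k=0$.) The bound $\beta_k\geqslant 1/(1+\|F_0\|_F)$ from Lemma~\ref{Lgranice} fixes the second. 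Both repairs, however, are valid only for $k\geqslant 1$.

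The genuine gap, which the paper shares, is the first step $k=0\to 1$. The inequality $\alpha_k+\beta_k\geqslant 1$ is Theorem~\ref{Osobine2}\textbf{(b)}, and it is stated for $k\in\N$ only. It comes from Theorem~\ref{Osobine}\textbf{(d)}, which rests on $\|F_k\|_F^2=\tr(F_k)$, i.e.\ on $F_k$ having been produced by an optimal step ($D_{k-1}\neq 0$). That identity fails for $F_0$ in general, and monotonicity of $\|F_k\|_F$ alone does not fix the sign of $\alpha_0+\beta_0-1$. For $F_0=\diag(0.5,0.6)$ one gets $(\alpha_0,\beta_0)=(-5.5,5)$, so $\alpha_0+\beta_0=-\tfrac12$.

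For the same reason, two of your other tools are unavailable at $k=0$. Your uniqueness argument invokes \textbf{(d)} at index $k$, and the lower bound of Lemma~\ref{Lgranice} uses \eqref{Bk2}--\eqref{Dk2}. The latter is harmless, since you can take $l=1$; the former is not.

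In fact $\lambda_{i,1}<1$ can fail even when \eqref{uslovBkRk} holds at $k=0$. Take $F_0$ diagonal with eigenvalues $-0.8$, $0.1$, $0.8$ of multiplicities $5$, $10320$, $97$ (e.g.\ $F_0=I-AA^T$ with $X_0=A^T$, $A$ diagonal, $\|A\|_2^2=1.8$). The optimal step maps $\lambda\mapsto 1-(\alpha_0+\beta_0)+\alpha_0\lambda+\beta_0\lambda^2\approx 1.244(\lambda-0.06)(\lambda-0.145)$. So $\beta_0\approx 1.244<1.25=1/r(F_0)$, while $\lambda_{1,1}\approx 1.011>1$. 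This does not by itself refute the theorem, since the hypothesis must also hold at later $k$. It does show that the induction, yours and the paper's, only runs from $k=1$ on, for instance under the extra assumption $\lambda_{i,1}<1$. The step from $F_0$ needs a separate argument or hypothesis.

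Finally, discard your fallback. An eigenvalue equal to $1$ stays equal to $1$ forever and rules out convergence. Theorem~\ref{trivija} concerns $D_k=0$, not $1\in\sigma(F_k)$.
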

\begin{proof}
    The first step is to show $\lambda_{i,k}<1$ for all $i \in \{1,\ldots,n\}$ and $k\in\mathbb{N}_0$, using mathematical induction over $k$. The base case $k=0$ is satisfied directly from the assumptions of the Theorem. Assume that for some non-negative integer $k$ we have $\lambda_{i,k}<1$ for $i \in \{1,2,\ldots,n\}$. From \eqref{vezab0b1} we see that 
    \bb
    \lambda_{i,k+1} -1 = (\lambda_{i,k}-1)(\alpha_k + \beta_k + \beta_k \lambda_{i,k}).
    \label{eq:lambdakp1m1}
    \ee
    From the assumption \eqref{uslovBkRk}, one concludes $\beta_k \lambda_{i,k} \geqslant -1$
    %
    for all $\lambda_{i,k}\in \sigma(F_k)$, which in turn implies $1+\beta_k \lambda_{i,k}\geqslant 0$. 
    Furthermore, part \textup{\textbf{(b)}} of Theorem \ref{Osobine2} implies 
    $
    \alpha_k + \beta_k + \beta_k \lambda_{i,k} \geqslant 1+\beta_k \lambda_{i,k} \geqslant 0.
    $
    Now from \eqref{eq:lambdakp1m1} and the induction hypothesis (that is $\lambda_{i,k}-1<0$)
    one obtains $\lambda_{i,k+1}<1$ for $i \in \{1,\ldots,n\}$. 
    This completes the proof by induction.

    \medskip

    Therefore $\beta_k \geqslant 0$ for all $k\in\mathbb{N}_0$, and  $\beta_{k_0}=0$ if and only if $\lambda_{i,k_0}=0$ for some $k_0 \in \mathbb{N}$. Indeed, from the fact that $\lambda_{i,k}<1$ for $i\in\{1,\dots ,n\}$ it follows that all $
    \tr(F_{k}^2-F_{k}^3) = \sum_{i=1}^n \lambda_{i,k}^2(1-\lambda_{i,k})$ is non-negative. Furthermore, since $\lambda_{i,k}^2(1-\lambda_{i,k})\geqslant 0$ we see that $\beta_k=0$ if and only if $\lambda_{i,k}^2(1-\lambda_{i,k})= 0$, where $i=\overline{1, \dots, n}$, which together with the fact that $\lambda_{i,k}<1$ implies that $\lambda_{i,k}=0$ for $i\in\{1,\dots , n\}$.

    Since $\lambda_{i,k}<1$ directly implies $1\not\in\sigma(F_k)$ for all $k\in\mathbb{N}_0$, Theorem \ref{Glavna1} guarantees the convergence of \textup{\textbf{SSHP2}}.
\end{proof}

\begin{thm}\label{PomocnaTKonv2}
    If $r(F_0)<1$ and  
    \begin{equation}\label{uslovBkRk2}
        \beta_k \leqslant \frac{4(r(F_k)+1)}{(r(F_k)+2)^2}
    \end{equation}
    for $k\in \mathbb{N}_0$,  then the iterative method \textup{\textbf{SSHP2}} converges.
\end{thm}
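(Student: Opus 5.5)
The plan is to reduce to Theorem \ref{Glavna1}. We show by induction on $k$ that $r(F_k)<1$ for all $k\in\mathbb{N}_0$. Once this holds, $1\not\in\sigma(F_k)$ for every $k$. Moreover, Lemma \ref{Lgranice} gives $\beta_k\geqslant \frac{1}{1+r(F_k)}>\frac12$, so the hypothesis of Theorem \ref{Glavna1} is satisfied with $l=0$ and $m=\frac12$. As in the preceding proofs, we assume $D_k\neq 0$ for all $k$. If $D_{k_0}=0$ for some $k_0$, Theorem \ref{trivija} applies: its hypothesis $1\not\in\sigma(F_k)$ is supplied by the same induction up to $k_0$, since afterwards the iterates are constant.

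The base case $r(F_0)<1$ is an assumption. For the inductive step, assume $r:=r(F_k)<1$, so every $\lambda=\lambda_{i,k}$ lies in $[-r,r]\subset(-1,1)$. We use the factorization already employed in the proof of Theorem \ref{PomocnaTKonv},
$$
\lambda_{i,k+1}=1+(\lambda-1)\,(\alpha_k+\beta_k+\beta_k\lambda).
$$

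\emph{Upper bound $\lambda_{i,k+1}<1$.} Since $\lambda-1<0$, it suffices that $\alpha_k+\beta_k+\beta_k\lambda>0$. By Theorem \ref{Osobine2}(b), $\alpha_k+\beta_k\geqslant 1$, hence $\alpha_k+\beta_k+\beta_k\lambda\geqslant 1-\beta_k r$. Hypothesis \eqref{uslovBkRk2} gives
$$
\beta_k r\leqslant \frac{4r(r+1)}{(r+2)^2},
$$
and this is $<1$ exactly when $3r^2<4$, which holds because $r<1$.

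\emph{Lower bound $\lambda_{i,k+1}>-1$.} This is the step I expect to be the crux, and it is where the specific constant in \eqref{uslovBkRk2} enters. We need $(1-\lambda)(\alpha_k+\beta_k+\beta_k\lambda)<2$. Lemma \ref{Lgranice} (valid since all eigenvalues are $<1$ and $D_k\neq0$) gives $\alpha_k\leqslant|\alpha_k|\leqslant r\beta_k$. Hence
$$
(1-\lambda)(\alpha_k+\beta_k+\beta_k\lambda)\leqslant \beta_k(1-\lambda)(1+r+\lambda).
$$
The quadratic $(1-\lambda)(1+r+\lambda)$ in $\lambda$ is maximized at $\lambda=-r/2$, with maximum value $\frac{(r+2)^2}{4}$. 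Combining this with \eqref{uslovBkRk2}, the product is bounded by $r+1<2$. In fact this yields $\lambda_{i,k+1}\geqslant -r$, which is slightly more than needed.

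Together the two bounds give $\sigma(F_{k+1})\subset(-1,1)$, i.e.\ $r(F_{k+1})<1$, completing the induction. We then conclude as described in the first paragraph: $1\not\in\sigma(F_k)$ and $\beta_k>\frac12$ for all $k$, so Theorem \ref{Glavna1} yields convergence of \textbf{SSHP2}.
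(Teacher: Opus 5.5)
Your proof is correct to the same extent as the paper's, and it is essentially the paper's proof. Both propagate a spectral bound on $F_k$ by induction, using $\alpha_k+\beta_k\geqslant 1$ and Lemma~\ref{Lgranice}, and both conclude via Theorem~\ref{Glavna1}; your estimate $\beta_k(1-\lambda)(1+r+\lambda)\leqslant\beta_k(r+2)^2/4\leqslant r+1$ is exactly the inequality the paper uses to show $f_k(-\alpha_k/(2\beta_k))\geqslant -r(F_k)$ at the vertex of the convex quadratic $f_k$.

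The main difference is the invariant. The paper also checks $f_k(\pm r(F_k))\leqslant r(F_k)$, which gives the stronger $r(F_{k+1})\leqslant r(F_k)$ and hence $\beta_k\geqslant 1/(1+r(F_0))$. Your weaker invariant $\sigma(F_{k+1})\subset[-r(F_k),1)$, together with $\beta_k>1/2$, serves equally well.

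Note, however, that both versions invoke Theorem~\ref{Osobine2}(b) and Lemma~\ref{Lgranice} already at $k=0$. There their derivation through Theorem~\ref{Osobine}(b) is unavailable, since $F_0$ is not the output of an optimal step. The cited inequalities can in fact fail at $k=0$: for $\sigma(F_0)=\{-0.2,-0.5,-0.8\}$ one has $r(F_0)=0.8$ and $\beta_0\approx 0.45$, within \eqref{uslovBkRk2}, yet $\alpha_0+\beta_0\approx 0.91<1$ and $\beta_0<1/(1+r(F_0))$. So the first inductive step is not actually justified in either proof.
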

\begin{proof}
    Let us show that if $r(F_k)<1$ for some $k\in \mathbb{N}_0$ the condition \eqref{uslovBkRk2} is satisfied we have that  $r(F_{k+1})\leqslant r(F_k)$. Observe the function 
    $$
    f_k(\lambda) = \beta_k \lambda^2 + \alpha_k \lambda + 1 - (\alpha_k+\beta_k),\qquad \lambda\in\mathbb{R}.
    $$
    We will show that $-r(F_k)\leqslant f_k(\lambda)\leqslant r(F_k)$ for $\lambda \in [-r(F_k),r(F_k)]$. Since $r(F_k)<1$ from \eqref{granica1} we have that $\beta_k >0$, so $f_k(\lambda)$ is a convex quadratic function. Therefore it is sufficient to establish that $f_k(r(F_k))\leqslant r(F_k)$, $f_k(-r(F_k))\leqslant r(F_k)$, and $f_k\left( - \frac{\alpha_k}{2\beta_k}\right) \geqslant -r(F_k)$. 
    Using the inequality from part \textbf{(b)} from Theorem \ref{Osobine2} we have that 
    \begin{align*}
    f_k(r(F_k)) &= \beta_k r(F_k)^2 + \alpha_k r(F_k) + 1 - (\alpha_k+\beta_k) \\
    &< (r(F_k)-1) (\alpha_k + \beta_k) + 1 \leqslant r(F_k)-1 +1 = r(F_k)
    \end{align*}
    Next, we have 
    \begin{align*}
    f_k(-r(F_k)) &= \beta_k r(F_k)^2 - \alpha_k r(F_k) + 1 - (\alpha_k+\beta_k) \\
    &= (r(F_k)+1) \left(\beta_k(r(F_k)-1) - \alpha_k\right) +1,
    \end{align*}
    Notice that it is easily verified that $\frac{4(r(F_k)+1)}{(r(F_k)+2)^2}< \frac{2}{1+r(F_k)}$ so if \eqref{uslovBkRk2} is satisfied then 
    $$
    \beta_k \leqslant \frac{2}{1+r(F_k)} = \frac{r(F_k)-1}{r(F_k)(r(F_k)+1)} + \frac{1}{r(F_k)}.
    $$
    The last inequality is equivalent to
    \begin{equation}\label{T310pom3}
        (r(F_k)+1)(\beta_k r(F_k)-1) \leqslant r(F_k)-1.
    \end{equation}
    Applying part \textbf{(b)} from Theorem \ref{Osobine2} again to the left hand side of \eqref{T310pom3} one finds
    $$
    \begin{aligned}
    r(F_k)-1  &\geqslant (r(F_k)+1)(\beta_k r(F_k)-1) \\
     &\geqslant  (r(F_k)+1)(\beta_k r(F_k)-(\alpha_k+\beta_k)) \\
     &= (r(F_k)+1) \left(\beta_k(r(F_k)-1) - \alpha_k\right)
    \end{aligned}
    $$
    which allows us to conclude that $f_k(-r(F_k)) \leqslant r(F_k)$.
    Using Lemma \ref{Lgranice} we have that 
\begin{equation}\label{T310pom2}
      \alpha_k^2 + 4\alpha_k \beta_k + 4 \beta_k^2 - 4 \beta_k \leqslant (r(F_k)^2+4r(F_k)+4) \beta_k^2 - 4 \beta_k.  
    \end{equation}    
    The inequality \eqref{uslovBkRk2} is equivalent with 
    $$
    (r(F_k)^2+4r(F_k)+4)\beta_k-4\leqslant 4r(F_k).
    $$
    After multiplying both sides of the previous inequality with $\beta_k$ and using
    \eqref{T310pom2}, we obtain 
    \begin{equation}\label{T310pom}
      \alpha_k^2 + 4\alpha_k \beta_k + 4 \beta_k^2 - 4 \beta_k \leqslant 4\beta_k r(F_k).  
    \end{equation}
    Dividing both sides of \eqref{T310pom} with $4\beta_k > 0$, one gets
    $$
    \frac{\alpha_k^2}{4\beta_k} + \alpha_k+\beta_k-1\leqslant r(F_k)
    $$
    and hence
    \begin{align*}
    f_k\left( - \frac{\alpha_k}{2\beta_k} \right) &= - \frac{\alpha_k^2}{4\beta_k} + 1 - (\alpha_k+\beta_k) = - \left( \frac{\alpha_k^2}{4\beta_k} + \alpha_k+\beta_k-1 \right) \geqslant - r(F_k).
    \end{align*}
    We have thus shown that if \eqref{uslovBkRk2} is satisfied, then $-r(F_k) \leqslant f_k(\lambda)\leqslant r(F_k)$, which in turn implies that  $r(F_{k+1})\leqslant r(F_k)$ for all $k\in \mathbb{N}_0$. We have 
    $\beta_k \geqslant\frac{1}{1+r(F_k)} \geqslant \frac{1}{1+r(F_0)}$
    for all $k\in \mathbb{N}_0$, so by applying Theorem \ref{Glavna1} we conclude that the iterative method \textup{\textbf{SSHP2}} converges.
\end{proof}

Theorem \ref{PomocnaTKonv} and Theorem \ref{PomocnaTKonv2} show that unlike the classic Schultz method the iterative method \textup{\textbf{SSHP2}} does not guarantee that the residual matrices have a decreasing spectral radius. However this is not surprising since  the iterative method \textup{\textbf{OPM}} from \cite{OPM} exhibits similar behavior.

We can briefly compare the efficiency of the iterative method \textup{\textbf{OPM}} introduced in \cite{OPM} with the iterative method \textup{\textbf{SSHP2}}. From \cite[Lemma 2]{OPM} we know that the norms of the residuals when \textup{\textbf{OPM}} is applied satisfy
$$
\| F_k \|_F^2 - \| F_{k+1} \|_F^2  = \frac{\left( \tr(F_k^2 - F_k^3)\right)^2}{\|F_k - F_k^2\|_F^2}.
$$
This gain between iterations is smaller than the gain in \textup{\textbf{SSHP2}}. Indeed, from part \textup{\textbf{(d)}} of Theorem \ref{Osobine} we know that 
$
\| F_k\|_F^2 - \| F_{k+1} \|_F^2 = (\alpha_k+\beta_k-1)\| I-F_k\|_F^2.
$
Using the equalities \eqref{Ak2},\,\eqref{Bk2} and \eqref{Dk2} we get
$$
\begin{aligned}
    \| F_k\|_F^2 - \| F_{k+1} \|_F^2 & = (\alpha_k+\beta_k-1)\| I-F_k\|_F^2  = \frac{\left( \tr(F_k^2 - F_k^3)\right)^2}{\| I-F_k\|_F^2 \| F_k - F_k^2 \|_F^2 - \left( \tr(F_k^2 - F_k^3)\right)^2} \cdot\| I-F_k\|_F^2  \\
    & \geqslant \frac{\left( \tr(F_k^2 - F_k^3)\right)^2}{\| I-F_k\|_F^2 \| F_k - F_k^2 \|_F^2}\cdot \| I-F_k\|_F^2  = \frac{\left( \tr(F_k^2 - F_k^3)\right)^2}{\|F_k - F_k^2\|_F^2},
\end{aligned}
$$
which shows that if an iteration of \textup{\textbf{OPM}} and \textup{\textbf{SSHP2}} are performed on the same matrix, the gain in \textup{\textbf{SSHP2}} is higher.

\bigskip

As the last part of the convergence analysis of \textup{\textbf{SSHP2}}, we examine how small perturbations of the sequence $\seqn{X_k}$ affect the method, under the assumption that $\seqn{F_k}$ is not convergenct (that is it does not converge to 0). 
\begin{dfn}
The perturbation of the sequence $\seqn{X_k}$ defined by \eqref{algor0} using the perturbation matrices $\Delta_1,\Delta_2\ldots,\Delta_m$ starting from the index $k_0$, is the sequence $\seqn{{\tilde X}_k}$ defined by:
$$
\aligned
\tilde X_k &=X_k \textnormal{ for } k\in\{0,1,\ldots,k_0\},\\
\tilde X_{k,p} &= \tilde X_{k} + \Delta_{k-k_0}, \tilde F_{k} = I-A\tilde X_{k,p},  \\
\tilde X_{k+1} & = X_{k,p} ((\tilde \alpha_k + \tilde \beta_k)I+\tilde \beta_k\tilde F_k),\textnormal{ for } k\in\{k_0+1,k_0+2,\ldots,k_0+m\},\\
\endaligned
$$
and finally
$$
\aligned
\tilde F_{k} &= I-A\tilde X_{k},\\
\tilde X_{k+1} &= \tilde X_{k} ((\tilde \alpha_k + \tilde \beta_k)I+\tilde \beta_k\tilde F_k),\qquad k>k_0+m.\\
\endaligned
$$
Here $\tilde \alpha_k$ and $\tilde \beta_k$ are determined using Algorithm \ref{optcoefit} for $\tilde F_k$ and $\tilde G_k=\tilde F_k^2$,
for every for $k\geqslant k_0$.
\end{dfn}

Simply put, $\seqn{\tilde X_k}$ is the perturbed iterative process obtained via the addition of a certain (perturbation) matrix to each $\tilde X_k$, for $k\in\{k_0+1,k_0+2,\ldots,k_0+m\}$, before the subsequent iteration is calculated. The theorem that follows demonstrates that in the case when $\seqn{X_k}$ does not converge to $A^{-1}$, there exist finitely many perturbation matrices whose norm can be arbitrarily small such that the iterative process modified by these perturbaations converges in a finite number of iterations.

\begin{thm}\label{Stabilnost}
Let $\seqn{X_k}$ the sequence defined by \eqref{algor0} where $\displaystyle\lim_{k\to \infty} F_k \neq 0$. If there exists a non-negative integer $l$ and a positive constant $m$ such that $\beta_k>m$ for all $k\geq l$, then for every $\epsilon>0$, there exists $k_0\in \N_0$ and the perturbation matrices $\Delta_1,\Delta_2,\ldots,\Delta_m$ for which $\|\Delta_i\|_F<\epsilon$, $i= 1,\ldots,m$ and  $\tilde F_{k_0+m+1}=0$.

\end{thm}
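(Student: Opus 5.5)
The plan is to rerun the first half of the proof of Theorem \ref{Glavna1}, which never used the hypothesis $1\notin\sigma(F_k)$, and then use the perturbations to remove, one at a time, the eigenvalues that get stuck at $1$.

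\textbf{Step 1: $F_k$ tends to an orthogonal projection.} From $A_k+B_k-D_k=(\tr(F_k^2-F_k^3))^2$ and Theorem \ref{Osobine2}(b) we get $\tr(F_k^2-F_k^3)\to 0$. Since $\beta_k>m$ for $k\ge l$, the formula \eqref{beta2} forces $D_k\to 0$. Because $\|I-F_k\|_F$ is non-decreasing and nonzero, this gives $\|F_k-F_k^2\|_F\to 0$. So every eigenvalue $\lambda_{i,k}$ tends to $0$ or $1$. Since $F_k\not\to 0$, the limit is a nonzero orthogonal projection $P$ of some rank $r\ge 1$ (the $F_k$ are symmetric).

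\textbf{Step 2: the limit eigenvalues are exactly $1$ from some index on.} Exactly as in the proof of Theorem \ref{Glavna1}, every $i$ with $\lambda_{i,k}\to 1$ must satisfy $\lambda_{i,k}=1$ for all large $k$; otherwise the ratio inequality \eqref{kolicnik2} yields a contradiction. Hence for large $k$ we have $\sigma(F_k)=\{1\}\cup S_k$, where $1$ has multiplicity $r$ and $S_k$ tends to $0$. Moreover, by Theorem \ref{trivija} and Step 1, we may choose $k_0$ so large that, in the spectral decomposition $F_{k_0}=\sum_j\lambda_j u_ju_j^T$, the eigenvalues other than $1$ are tiny.

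\textbf{Step 3: the perturbations.} Take $m=r$ perturbations (this is the number of perturbation matrices in the statement, not the constant bound on $\beta_k$). The natural choice is $\Delta=A^{-1}u u^T$ times a small factor, or better the exact correction $\Delta=A^{-1}P$. The difficulty is that $\|A^{-1}P\|_F$ is not small. So I would instead inject a small multiple, $\tilde X_{k,p}=X_k+\eta A^{-1}u_1u_1^T$. This changes the eigenvalue $1$ into $1-\eta\neq 1$ and leaves the other eigenpairs intact, because $\tilde F_k=F_k-\eta u_1u_1^T$ and $F_k$ is symmetric with $u_1$ an eigenvector.

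\textbf{Step 4: getting exact zero in finitely many steps.} Exact termination $\tilde F_{k_0+m+1}=0$ needs more than this, and this is the main obstacle. The idea is to use the fact that the optimal step minimizes $\|F_{k+1}\|_F$ over all quadratics $p$ with $p(1)=1$ applied to the spectrum, and a quadratic can annihilate at most two distinct eigenvalues. So I would use the perturbations to collapse the spectrum. Each $\Delta_j$ is chosen as $A^{-1}E_j$, with $E_j$ a small symmetric matrix diagonal in the eigenbasis. It pushes the tiny eigenvalues of $S_k$ exactly to $0$ and shifts one unit eigenvalue to a common value $1-\eta$; all of these changes have size $O(\eta+\max|S_k|)<\epsilon\|A^{-1}\|^{-1}$. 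After such a perturbation, $\tilde F_k$ has at most two distinct eigenvalues besides the remaining $1$'s. The final perturbation leaves only the two values $0$ and $1-\eta$. Then the minimizing quadratic $p$ with $p(1)=1$, $p(0)=0$, $p(1-\eta)=0$ exists, and by the Theorem in Section~\ref{construction} it attains $\|\tilde F_{k_0+m+1}\|_F=0$. One must also check that $\tilde D_k\neq 0$ here, which holds by the Cauchy--Schwarz characterization as long as the two eigenvalues are distinct and different from $1$. The delicate points are the bookkeeping of how many steps are needed, which I would handle by perturbing all $r$ unit eigenvalues at once in the last step if the count $m$ must be small, and verifying that the intermediate optimal steps do not recreate eigenvalue $1$.
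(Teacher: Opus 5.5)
Your proposal is correct and is essentially the paper's proof. The paper also sets $\tilde F_{k_0+1}=PD_0P^T$ with spectrum $\{1-\delta/2,1,\dots,1,0,\dots,0\}$. One step with $\tilde\alpha_{k_0+1}=1-2/\delta$, $\tilde\beta_{k_0+1}=2/\delta$ then kills the shifted eigenvalue; this is exactly your interpolating quadratic $p(\lambda)=\lambda(\lambda-1+\eta)/\eta$ with $\eta=\delta/2$. The procedure is repeated once per unit eigenvalue, so the number of perturbations equals the rank of the limit projection.

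The only difference is that you justify the step through the optimality theorem and the bound $\sum_i p(\lambda_i)^2\geqslant\#\{i:\lambda_i=1\}$, whereas the paper computes the coefficients explicitly. That same argument already settles your worry about the intermediate steps, since $p$ maps $\{0,1-\eta,1\}$ to $\{0,0,1\}$.
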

\begin{proof}
From the conditions of the theorem, and from the proof of Theorem \ref{PrvaT} we conclude that $\seqn{F_k}$ converges $F$, which is a projection, and  
$F\neq 0$.  
 We can assume without loss of generality that the first $m$ eigenvalues of $F$ are 1 and other $n-m$ are equal 0. This means that $F=PDP^T$ where $P$ is an orthogonal matrix and 
$$
D=\diag\left(\underbrace{1,1,\ldots,1}_m,\underbrace{0,\ldots,0}_{n-m}\right).
$$
Let $\epsilon >0$ be arbitrary, let $\delta = \min\left\{\frac{\epsilon}{ \|A^{-1}\|_F}, \epsilon\right\}$, and let $k_0$ be the non-negative integer for which 
\bb
\|F_{k_0+1}-F\|_F<\frac{\delta}2
\label{eq:Thm36-1}
\ee
Next, let
$$
D_0=\diag\left(1-\frac{\delta}{2},\underbrace{1,1,\ldots,1}_{m-1},\underbrace{0,\ldots,0}_{n-m}\right),
$$
and $\Delta_1 = A^{-1}(F_{k_0+1} - PD_0P^T)$. We have the equality $\tilde F_{k_0+1}=PD_0P^T$ and 
\bb
\| \tilde F_{k_0+1} - F\|_F =  \frac{\delta}{2}.
\label{eq:Thm36-2}
\ee
From \eqref{eq:Thm36-1} and \eqref{eq:Thm36-2} we get
$$
\| \tilde F_{k_0+1} - F_{k_0+1}\|_F  \leqslant \| \tilde F_{k_0+1} - F\|_F + \|F - F_{k_0+1}\|_F <\delta$$ and so $$
\|\Delta_1\|_F \leqslant \|A^{-1}\|_F\| \tilde F_{k_0+1} - F_{k_0+1}\|_F < \|A^{-1}\|_F\cdot \delta \leqslant\epsilon.
$$
For this choice, one gets
\[
\begin{aligned}
\tr(I-\tilde F_{k_0+1})
&=\frac{\delta}{2}+n-m,
&\qquad
\|I-\tilde F_{k_0+1}^2\|_F^2
&=\left(\delta-\frac{\delta^2}{4}\right)^2+n-m,\\[1ex]
\tr(I-\tilde F_{k_0+1}^2)
&=\delta-\frac{\delta^2}{4}+n-m,
&\qquad
\tr\!\left((I-\tilde F_{k_0+1})(I-\tilde F_{k_0+1}^2)\right)
&=\frac{\delta}{2}\left(\delta-\frac{\delta^2}{4}\right)+n-m,\\[1ex]
\|I-\tilde F_{k_0+1}\|_F^2
&=\frac{\delta^2}{4}+n-m.
\end{aligned}
\]
Using \eqref{alfabeta1} (the formulas \eqref{alfa2} and \eqref{beta2} are not applicable due to the pertubations), gives us the coefficients $\tilde \alpha_{k_0+1}$ and $\tilde \beta_{k_0+1}$:
$$
\begin{aligned}
 \tilde \alpha_{k_0+1}& = \frac{ \| I-\tilde F_{k_0+1}^2\|_F^2 \tr (I-\tilde F_{k_0+1}) - \tr(I-\tilde F_{k_0+1}^2)\tr ( (I-\tilde F_{k_0+1}^2)(I-\tilde F_{k_0+1})) }{ \| I- \tilde F_{k_0+1}\|_F^2 \| I- \tilde F_{k_0+1}^2\|_F^2 - \left( \tr ( (I-\tilde F_{k_0+1}^2)(I-\tilde F_{k_0+1})) \right)^2}= 1 - \frac{2}{\delta}, \\
 \tilde \beta_{k_0+1} & = \frac{ \| I-\tilde F_{k_0+1}\|_F^2 \tr (I-\tilde F_{k_0+1}^2) - \tr(I-\tilde F_{k_0+1})\tr ( (I-\tilde F_{k_0+1}^2)(I-\tilde F_{k_0+1})) }{ \| I- \tilde F_{k_0+1}\|_F^2 \| I- \tilde F_{k_0+1}^2\|_F^2 - \left( \tr ( (I-\tilde F_{k_0+1}^2)(I-\tilde F_{k_0+1})) \right)^2} = \frac{2}{\delta}
\end{aligned}.$$
The matrix $\bar F_{k_0+2}=I-A\tilde X_{k_0+2}$ has the following eigenvalues:
$$\begin{aligned} \bar \lambda_{1,k_0+1} & = 1- \left( \tilde \alpha_{k_0+1} + \tilde \beta_{k_0+1} \right) + \tilde \alpha_{k_0+1} \bar \lambda_{1,k_0} + \tilde \beta_{k_0+1} \bar \lambda_{1,k_0}^2 = \\
&  = 0 + \left( 1- \frac{2}{\delta} \right) \left(1- \frac{\delta}{2} \right) + \frac{2}{\delta} \left( 1- \frac{\delta}{2} \right)^2 = \\
& =  \left( 1 - \frac{\delta}{2} \right) \left( 1- \frac{2}{\delta} + \frac{2}{\delta} -1\right) =  0, \\
\bar \lambda_{j,k_0+1} & = 1, \qquad j\in\{2,\dots,m\}, \\
\bar \lambda_{j,k_0+1} & = 0, \qquad j \in\{m+1, \dots ,n\}.\end{aligned}$$
If
$$
\Delta_i=A^{-1}P\,\diag\left(\underbrace{0,0,\ldots,0}_{i-1},\delta,\underbrace{0,0,\ldots,0}_{n-i}\right)P^T, \quad
i\in\{1,2,\ldots,m\}
$$
it follows that $\|\Delta_i\|_F<\|A^{-1}\|_F\delta\leqslant \epsilon$, and  
the eigenvalues of $\bar F_{k_0+i+1}=I-A\tilde X_{k_0+i+1}$ are
$$\begin{aligned} \bar \lambda_{k_0+1,i} & = 0, \qquad j\in \{1,2,\ldots,i\}, \\
\bar \lambda_{j,k_0+i} & = 1, \qquad j\in\{i+1,\dots,m\}, \\
\bar \lambda_{j,k_0+i} & = 0, \qquad i \in\{m+1, \dots ,n\}.\end{aligned}$$
The conclusion now follows from $\tilde F_{k_0+m+1} = \bar F_{k_0+m+1} = 0$.
\end{proof}

Theorem \ref{Stabilnost} suggests that though the \textbf{SSHP2} iterative method $\seqn{X_k}$ isn't convergent, the perturbation(s) (which are arbitrarily small) can alter the behaviour of the method into convergence. These perturbation(s) could originate from the round-off errors, which naturally occur in floating point arithmetic. While Theorem \ref{Stabilnost} has predetermined perturbations, a similar result might be proved for a much wider, or possibly even arbitrary choice of perturbation matrices.

\section{Numerical results and performance testing}
\label{sect:testing}

This section contains the results of multiple numerical experiments in which the method \textup{\textbf{SSHP2}} was analysed together with a number of well known iterative methods used to find the matrix inverse, including \textup{\textbf{OPM}}. The implementation of the method and the tests were done in Python. As a measure of precaution against catastrophic cancellation a relative tolerance was used in the computation of the coefficients $\alpha_k$ and $\beta_k$. To be more precise, instead of the condition $|D_k|\geqslant \delta$, the condition $|D_k| \geqslant \delta\cdot \left|C_{00}^{(k)}C_{11}^{(k)}\right|$ is used, where $\delta >0$ is the relative tolerance. In this case we have used $\delta= 10^{-10}$ as the relative tolerance.
\bigskip

\begin{exm}\label{primer1}
In this experiment the performance of the method \textup{\textbf{SSHP2}} was examined by applying it on \href{https://nhigham.com/2021/07/06/what-is-the-kac-murdock-szego-matrix/}{Kac-Murdock-Szeg\"o} matrices of parameter close to $1$, which are a familiar class of ill-conditioned matrices, and the results compared to the results when the standard Schultz method and the \textup{\textbf{OPM}} method were used. 
    Let $\textup{KMS}_{n,\rho}$ denote the Kac-Murdock-Szeg\"o matrix whose parameter is $\rho$ of order $n$. Furhtermore, the methods  \textup{\textbf{SSHP2}} and \textup{\textbf{OPM}} were used with two choices of initial approximation, $X_0=\frac{2}{\| A \|_F^2} A^T$ and $X_0=A^T$. The choice for the parameter $\rho$ was $\rho=0.99$, and the order was $n= 200,300,400,500,600$, while Schultz's method was tested with the usual initial approximation $X_0=\frac{2}{\|A \|_F^2} A^T$. The number of iterations and matrix multiplications until the precision of $\varepsilon=10^{-10}$ was reached for each test is presented in Table \ref{tbl:1}.

 \begin{table}[H]
    $$
    \begin{array}{|| c || c || c || c || c || c || c ||}
    \hline
        \textup{Matrix} & ~ & \textbf{HP2} & \begin{aligned}  &\textbf{\textup{OPM}}\\ X_0 &=\frac{2}{\|A\|_F^2}A^T \end{aligned} & \begin{aligned}  &\textbf{\textup{OPM}}\\ X_0 &=A^T \end{aligned} & \begin{aligned}  &\textbf{\textup{SSHP2}}\\ X_0 &=\frac{2}{\| A \|_F^2}A^T \end{aligned} & \begin{aligned}  &\textbf{\textup{SSHP2}}\\ X_0 &=A^T \end{aligned} \\ \hline
        \textup{KMS}_{200,0.99} & \textup{Iterations} & 33 & 21 & 31 & \textbf{20} & \textbf{20} \\ \hline
        ~ & \textup{MMS} & 66 & 63 & 93 & \textbf{60} & \textbf{60} \\ \hline
        \textup{KMS}_{300,0.99} & \textup{Iterations} & 34 & 22 & 51 & \textbf{20} & \textbf{20} \\ \hline
        ~ & \textup{MMS} & 68 & 66 & 153 & \textbf{60} & \textbf{60} \\ \hline
        \textup{KMS}_{400,0.99} & \textup{Iterations} & 34 & 22 & 50 & \textbf{21} & \textbf{21} \\ \hline
        ~ & \textup{MMS} & 68 & 66 & 150 & \textbf{63} & \textbf{63} \\ \hline
        \textup{KMS}_{500,0.99} & \textup{Iterations} & 35 & 22 & 58 & \textbf{21} & \textbf{21} \\ \hline
        ~ & \textup{MMS} & 70 & 66 & 174 & \textbf{63} & \textbf{63} \\ \hline
        \textup{KMS}_{600,0.99} & \textup{Iterations} & 35 & 22 & 31 & \textbf{21} & \textbf{21} \\ \hline
        ~ & \textup{MMS} & 70 & 66 & 93 & \textbf{63} & \textbf{63} \\ \hline
    \end{array}
    $$
    \caption{Number of iterations and matrix multiplications required to reach the preferred error tolerance of $\varepsilon=10^{-10}$ for different values of $n$.}
\label{tbl:1}
\end{table}   

We should remark that the choice of  $X_0$ did not impact the convergence of \textbf{\textup{SSHP2}} and \textup{\textbf{OPM}}. Other known iterative methods are often not flexible in the choice of the initial approximation, and it shows that \textbf{\textup{SSHP2}}, similarly to \textup{\textbf{OPM}}, is converges for much broader choice of the initial matrix $X_0\in \R^{n\times n}$. More significantly, \textup{\textbf{SSHP2}} was the most efficient method for each choice of $X_0$.

Somewhat surprisingly, unlike \textup{\textbf{OPM}}, the different choice of initial approximation had little effect on the number of iterations (equivalently matrix multiplications) that were needed to reach a precision of $\varepsilon=10^{-10}$ when \textup{\textbf{SSHP2}} was tested. Table \ref{tbl:2} shows the values of 
the residuals for each iteration of \textup{\textbf{SSHP2}} applied to $\textup{KMS}_{600,0.99}$ for both choice of starting approximations. It shows that even though the initial residuals were very different the residuals that followed were very close.
\begin{table}[H]
\centering
\begin{minipage}{0.48\textwidth}
\centering
\[
\begin{array}{||c||c||c||}
\hline
~ & X_0=\frac{2}{\| A \|_F^2}A^T & X_0=A^T\\
\hline
\textnormal{Iteration} & \|F_k\|_F & \|F_k\|_F\\
\hline
0  & 2.444\cdot10^{1} & 3.350\cdot10^{4}\\ \hline
1  & 2.441\cdot10^{1} & 2.441\cdot10^{1}\\ \hline
2  & 2.438\cdot10^{1} & 2.438\cdot10^{1}\\ \hline
3  & 2.433\cdot10^{1} & 2.433\cdot10^{1}\\ \hline
4  & 2.427\cdot10^{1} & 2.427\cdot10^{1}\\ \hline
5  & 2.419\cdot10^{1} & 2.419\cdot10^{1}\\ \hline
6  & 2.408\cdot10^{1} & 2.408\cdot10^{1}\\ \hline
7  & 2.393\cdot10^{1} & 2.393\cdot10^{1}\\ \hline
8  & 2.372\cdot10^{1} & 2.372\cdot10^{1}\\ \hline
9  & 2.344\cdot10^{1} & 2.344\cdot10^{1}\\ \hline
10 & 2.304\cdot10^{1} & 2.304\cdot10^{1}\\
\hline
\end{array}
\]
\end{minipage}
\hfill
\begin{minipage}{0.48\textwidth}
\centering
\[
\begin{array}{||c||c||c||}
\hline
~ & X_0=\frac{2}{\| A \|_F^2}A^T & X_0=A^T\\
\hline
\textnormal{Iteration} & \|F_k\|_F & \|F_k\|_F\\
\hline
11 & 2.249\cdot10^{1} & 2.249\cdot10^{1}\\ \hline
12 & 2.167\cdot10^{1} & 2.167\cdot10^{1}\\ \hline
13 & 2.044\cdot10^{1} & 2.044\cdot10^{1}\\ \hline
14 & 1.844\cdot10^{1} & 1.844\cdot10^{1}\\ \hline
15 & 1.508\cdot10^{1} & 1.508\cdot10^{1}\\ \hline
16 & 9.626\cdot10^{0} & 9.626\cdot10^{0}\\ \hline
17 & 3.498\cdot10^{0} & 3.498\cdot10^{0}\\ \hline
18 & 4.272\cdot10^{-1} & 4.272\cdot10^{-1}\\ \hline
19 & 7.033\cdot10^{-3} & 7.033\cdot10^{-3}\\ \hline
20 & 2.322\cdot10^{-6} & 2.322\cdot10^{-6}\\ \hline
21 & 8.310\cdot10^{-12} & 8.240\cdot10^{-12}\\
\hline
\end{array}
\]
\end{minipage}

\caption{Values of the residuals after each iteration of \textup{\textbf{SSHP2}} applied to $\textup{KMS}_{600,0.99}$ for different initial approximations.}
\label{tbl:2}
\end{table}

\end{exm}

\begin{exm}\label{primer2}
 In this experiment the performance of \textup{\textbf{SSHP2}}  is examined together with the following iterative methods, \textbf{\textup{HP2}}, \textbf{\textup{HP3}},  \textbf{\textup{IHP9}} and \textup{\textbf{OPM}} (see \cite{MP2} regarding \textbf{\textup{IHP9}}) on 10 random matrices whose dimensions are $n\times n$ where $n\in \lbrace 800, 1000, 1200, 1400, 1600, 1800, 2000\rbrace$. Comparisons were made in two categories, the average number of \textup{MMS} (matrix multiplications), and the average time of execution.  As in the previous experiment, the precision was $\varepsilon=10^{-10}$. For consistency, an invertibility check was performed before each trial. Each entry of the matrices used in the experiment is generated by the random sample with  uniform distribution on $[-1,1]$. The initial approximation is chosen to be $X_0=\frac{2}{\| A \|_F^2} A^T$. Table \ref{tbl:3} and Table \ref{tbl:4} show the obtained results. Table \ref{tbl:3} shows the average number of matrix multiplications until the required precision was reached, while Table \ref{tbl:4} contains the average time of execution.

 \begin{table}[H]
   \centering
    \begin{tabular}{|| c || c || c || c || c || c ||}
    \hline
        \textbf{MatrixSize} & \textbf{HP2} & \textbf{HP3} & \textbf{IHP9}  & \textbf{OPM} & \textbf{SSHP2} \\ \hline
         800 & 69.4 & 69 & 66 & 61.5 & \textbf{57.6} \\ \hline
        1000 & 74 & 73.8 & 70 & 63.9 & \textbf{61.8} \\ \hline
        1200 & 74.6 & 74.1 & 70 & \textbf{64.5} & \textbf{64.5} \\ \hline
        1400 & 71.4 & 70.8 & 67.5 & 60.9 & \textbf{57.6} \\ \hline
        1600 & 74.6 & 74.1 & 71 & 67.5 & \textbf{60.6} \\ \hline
        1800 & 75.6 & 74.7 & 71.5 & 66.6 & \textbf{63.3} \\ \hline
        2000 & 77.4 & 77.1 & 73 & 70.5 & \textbf{64.2} \\ \hline
    \end{tabular}
   \caption{Average total number of MMS required  to approximate the inverse to the required precision $\varepsilon=10^{-10}$. 
   }
\label{tbl:3} 
\end{table}
\begin{table}[H]
    \centering
    \begin{tabular}{|| c || c || c || c || c || c ||}
    \hline
        \textbf{MatrixSize} & \textbf{HP2} & \textbf{HP3} & \textbf{IHP9}  & \textbf{OPM} & \textbf{SSHP2} \\ \hline
      800 & \textbf{1.192} & 1.308 & 1.337 & 1.206 & 1.228 \\ \hline
        1000 & 1.638 & \textbf{1.587} & 1.932 & 1.659 & 1.642 \\ \hline
        1200 & 3.018 & \textbf{2.342} & 2.896 & 3.129 & 3.772 \\ \hline
        1400 & 4.492 & 4.371 & 4.290 & \textbf{3.523} & 3.568 \\ \hline
        1600 & 5.299 & 9.232 & 8.715 & 5.340 & \textbf{4.834} \\ \hline
        1800 & 9.018 & 8.303 & 8.070 & 9.151 & \textbf{7.669} \\ \hline
        2000 & 9.564 & 17.493 & 13.563 & 9.716 &\textbf{9.200} \\ \hline
    \end{tabular}
    \caption{Average execution time (in seconds) required to approximate the inverse to the required accuracy $\varepsilon=10^{-10}$. 
    }
\label{tbl:4} 
\end{table}
From Table \ref{tbl:3} we see that in terms of the total number of matrix multiplications \textup{\textbf{SSHP2}} is the most efficient method, while Table \ref{tbl:4} demonstrates that for random matrices of lower sizes \textup{\textbf{SSHP2}} is slightly slower than \textup{\textbf{HP2}} or \textup{\textbf{HP3}} but becomes faster as the dimensions increase. This is most like due to the implementation in Python, which creates a time overhead in the calculation in the coefficients $\alpha_k$ and $\beta_k$. Although these calculations (using Algorithm \ref{optcoefit}) are performed in $\O(n^2)$ (which is 
faster than matrix multiplications where the complexity is $\Omega(n^2)$), manual implementation in Python can produce the additional time overhead.
\end{exm}

\begin{exm}\label{primer3}
    The goal of this experiment was to test the performance of \textbf{\textup{SSHP2}} on some ill-conditioned matrices from \textup{\texttt{MatrixMarket}} \cite{MatrixMarket}, and compare it to the performance of \textbf{\textup{HP2}}, \textbf{\textup{HP3}}, \textbf{\textup{IHP9}} and \textup{\textbf{OPM}}. The comparisons were again made using \textup{MMS} and execution time until the precision $\varepsilon=10^{-10}$ was reached. Results are shown in Table \ref{tbl:5}. 

 \begin{table}[H]
$$ 
\begin{array}{||c||c||c||c||c||c ||}\hline
 \textbf{\textup{Matrix}} & \textbf{\textup{HP2}} & \textbf{\textup{HP3}} & \textbf{\textup{IHP9}} & \textbf{\textup{OPM}} & \textbf{\textup{SSHP2}} \\ \hline
\href{https://math.nist.gov/MatrixMarket/data/Harwell-Boeing/smtape/bp___200.html}{\texttt{\textup{bp\_\_\_200}}} & & & & & \\ \hline
\textup{MMS}&	104 &	102 &	\textbf{95} & 222 & 129\\ \hline
\textup{TIME}&	1.749 &	\textbf{1.568} &	1.811	& 3.568	& 2.175
 \\ \hline
\href{https://math.nist.gov/MatrixMarket/data/Harwell-Boeing/smtape/bp___400.html}{\texttt{\textup{bp\_\_\_400}}} & & & & & \\ \hline
\textup{MMS} &	102	& 99 & \textbf{90} & 96 & 117\\ \hline
\textup{TIME} &	1.411 &	\textbf{1.348}	& 1.617 & 1.566	& 2.020 
\\ \hline
\href{https://math.nist.gov/MatrixMarket/data/SPARSKIT/fidap/fidap001.html}{\texttt{\textup{fidap01}}} & & & & & \\ \hline
\textup{MMS}	& 78	& 78 &	75	& 72 & \textbf{69} \\ \hline
\textup{TIME}	& 0.0195 &	0.0187 & 0.0198 & 0.015135
& \textbf{0.015130}
\\ \hline
\href{https://math.nist.gov/MatrixMarket/data/NEP/mvmmcd/cdde1.html}{\texttt{\textup{cdde1}}} & & & & & \\ \hline
\textup{MMS} &	70 & 69 & 65 & 60 & \textbf{57}\\ \hline
\textup{TIME} &	1.373	& \textbf{1.303} & 	1.664	& 1.429 &	1.391  \\ \hline
\href{https://math.nist.gov/MatrixMaket/data/NEP/mvmmcd/cdde2.html}{\texttt{\textup{cdde2}}} & & & & & \\ \hline
\textup{MMS} & 48 &	48 & 50 & 39 & \textbf{36}\\ \hline
\textup{TIME} &	0.958 &	0.90325 
&	1.294 &	0.925	& \textbf{0.90320} 
\\ \hline
\href{https://math.nist.gov/MatrixMarket/data/NEP/mvmmcd/cdde3.html}{\texttt{\textup{cdde3}}} & & & & & \\ \hline
\textup{MMS} & 72 & 72 & 70 & 63 & \textbf{60}\\ \hline
\textup{TIME} & 1.406 &	\textbf{1.395}	& 1.814 & 1.650	& 1.493  \\ \hline
\href{https://math.nist.gov/MatrixMarket/data/NEP/mvmmcd/cdde4.html}{\texttt{\textup{cdde4}}} & & & & & \\ \hline
\textup{MMS} & 50 &	51 & 50 & 39 & \textbf{36} \\ \hline
\textup{TIME} & 1.006 & 0.974 & 1.317  & 0.942 & \textbf{0.915}  \\ \hline
\href{https://math.nist.gov/MatrixMarket/data/NEP/mvmmcd/cdde6.html}{\texttt{\textup{cdde6}}} & & & & & \\ \hline
\textup{MMS} & 56 & 57 & 55 & 45 & \textbf{42} \\ \hline
\textup{TIME} & 1.126 & 1.104 & 1.520 & 1.098& \textbf{1.029}   \\ \hline

\end{array}
$$
\caption{Total number of matrix multiplications (MMS) and time of execution (TIME, in seconds) for the tested methods and different ill-conditioned matrices from \texttt{\textup{Matrix Market}} \cite{MatrixMarket}.
}
\label{tbl:5}
\end{table}
According to Table \ref{tbl:5}, \textup{\textbf{SSHP2}} is the most efficient method in terms of the number of matrix multiplications in most cases, and that it is often the fastest method (in terms of total running time). As previously mentioned, one possible reason for the time difference is the manual implementation of the Algorithm \ref{optcoefit} for computing coefficients $\alpha_k$ and $\beta_k$ in Python, as discussed in the end of the previous example.

Method \textup{\textbf{SSHP2}} was the best in terms of MMS in 6 out of 8 tests, and the fastest in 4 out of 8 tests. 
 \end{exm}

\section{Conclusion and further research}
\label{sect:conclusion}

The constructed method \textbf{SSHP2} presents a new approach in computing of matrix inverses as the coefficients of the iterative method are dynamically computed in each iteration, by minimizing the Frobenius norm of the residual. As the objective function is quadratic in this case, we could derive explicit expressions to calculate the coefficients. 

\medskip

According to the numerical examples shown in the previous section, \textbf{SSHP2} method outperforms the most efficient existing iterative methods in the majority of test cases. More importantly, the examples suggest that it converges for the much wider choice of the initial approximations $X_0$ then classical iterative methods (they all require $r(I-AX_0)<1)$), and possibly posses \textbf{almost global convergence} (under the assumption that $F_0=I-AX_0$ is symmetric). The proof of the latter fact is remained as the open problem. Some other possible directions for further research are:
\begin{itemize}
    \item Is it possible to generalize this approach to the computation of the Moore--Penrose and other generalized inverses, for non-regular matrices $A$?
    \item Is it possible to create general scheme for construction of methods with any number of coefficients and how would those methods work in practice, while having in mind that expressions are complicated even in the simplest case?
   \item Which method from that class is computationally optimal? The last question is naturally imposed, since with adding coefficients, we are increasing speed of convergence while the complexity and number of calculations in each iteration is increased as well.
\end{itemize} 


\section*{Acknowledgement}

The authors were financially supported by the Ministry Of Science, Technological Development and Innovation of the Republic of Serbia (Contracts No. 451-03-34/2026-03/200109, 451-03-33/2026-03/200104, 451-03-34/2026-03/200124).

\medskip

The fourth author is supported by the bilateral grant between Serbia and Germany no. 337-00-253/2023-05/14, by the Ministry Of Science, Technological Development, and Innovation of the Republic of Serbia, and the DAAD foundation (Germany). 

\section*{Competing interests} 

The authors declare that they have no competing interests.


\begin{thebibliography}{99}\setlength{\parskip}{2pt}\setlength{\itemsep}{0pt plus 0.4ex}
{\footnotesize









\bibitem{Aceto2025} L. Aceto, L. Gemignani, \textit{A scaling-and-squaring method for computing the inverses of matrix $\varphi$-functions}, Numerical Algrotihms \textbf{100 (4)} (2025), 1581-1598. \href{https://doi.org/10.1007/s11075-025-02049-1}{\texttt{https://doi.org/10.1007/s11075-025-02049-1}}

\bibitem{Seven} D. Ahmed, M. Hama, K. H. Faraj Jwamer, S. Shateyi, \textit{A Seventh-Order Scheme for Computing the Generalized Drazin Inverse}, Mathematics {\bf 7(7)} (2019),  622;
\href{https://doi.org/10.3390/math7070622}{\texttt{https://doi.org/10.3390/math7070622}}

\bibitem{Art} S. Artidiello, A. Cordero, J. R. Torregrosa, M. P. Vassileva, \textit{Generalized Inverses Estimations by Means of Iterative Methods with Memory}, Mathematics {\bf 8(1)} (2020),  2;  \href{https://doi.org/10.3390/math8010002}{\texttt{https://doi.org/10.3390/math8010002}}

\bibitem{MIMO} M. Ataeeshojai, R. C. Elliott, W. A. Krzymień, C. Tellambura, I. Maljević, \textit{Iterative Matrix Inversion Methods for Precoding in Cell-Free Massive MIMO Systems},  IEEE Transactions on Vehicular Technology  {\bf 71 (11)} (2022),
\href{https://doi.org/10.1109/TVT.2022.3194870}{\texttt{https://doi.org/10.1109/TVT.2022.3194870}}

\bibitem{BB} F. S.V. Bazán,  E. Boos, \textit{Schultz matrix iteration based method for stable solution of discrete ill-posed problems}, Linear Algebra and its Applications {\bf 554} (2018), 120--145. \href{https://doi.org/10.1016/j.laa.2018.05.022}{\texttt{https://doi.org/10.1016/j.laa.2018.05.022}}

\bibitem{Cordero2021} A. Cordero, P. Soto-Quiros, J. R. Torregrossa, \textit{A general class of arbitrary order iterative methods for computing generalized inverses}, Applied Mathematics and Computation \textbf{409} (2021), 126381. \href{https://doi.org/10.1016/j.amc.2021.126381}{\texttt{https://doi.org/10.1016/j.amc.2021.126381}}

\bibitem{Cordero2024} A. Cordero, E. Segura, J. R. Torregrossa, M. P. Vassileva, \textit{Inverse matrix estimations by iterative methods with weight functions and their stability analysis}, Applied Mathematics Letters \textbf{155} (2024), 109122. \href{10.1016/j.aml.2024.109122}{\texttt{10.1016/j.aml.2024.109122}}

\bibitem{ChenWan} H. Chen, Y. Wang, {\em A Family of higher-order convergent iterative methods for computing
the Moore–-Penrose inverse}, Applied Mathematics and Computation {\bf 218} (2011), 4012-–4016, \href{https://doi.org/10.1016/j.amc.2011.05.066}{\texttt{https://doi.org/10.1016/j.amc.2011.05.066}}

\bibitem{SMSMP} L. Chen, E.V. Krishnamurthy, I. Macleod, {\em Generalized matrix inversion and rank computation by successive matrix powering}, Parallel Computing {\bf 20} (1994), 297--311, \href{https://doi.org/10.1016/S0167-8191(06)80014-1}{\texttt{https://doi.org/10.1016/S0167-8191(06)80014-1}}

\bibitem{GIBS} E. K. Dehdezi, S. Karimi, \textit{GIBS: a general and efficient iterative method for computing the approximate inverse and Moore–Penrose inverse of sparse matrices based on the Schultz iterative method with applications}, Linear and Multilinear Algebra {\bf 71} (2023), 1905-1921. 
\href{https://doi.org/10.1080/03081087.2022.2088673}{\texttt{https://doi.org/10.1080/03081087.2022.2088673}}

\bibitem{Horn} R. E. Horn, C. R. Johnson, \textit{Matrix Analysis - 2nd ed.}, Cambridge University Press, 2012.

\bibitem{Janna2025} C. Janna, A. Franceschini, \textit{Nesting Approximate Inverses for Improved Preconditioning and Algebraic Multigrid Smoothing}, SIAM Journal on Matrix Analysis and Applications \textbf{46 (1)} (2025), 393-415. \href{https://doi.org/10.1137/24M1679847}{\texttt{https://doi.org/10.1137/24M1679847}}

\bibitem{Kaur2020} M. Kaur, M. Kansal, S. Kumar, \textit{An efficient hyperpower iterative method for computing weighted MoorePenrose inverse}, AIMS Mathematics \textbf{5 (3)} (2020), 1680-1692. \href{10.3934/math.2020113}{\texttt{10.3934/math.2020113}}

\bibitem{effgen} M. Kansal, S. Kumar, M. Kaur, \textit{An efficient matrix iteration family for finding the generalized outer inverse}, Applied Mathematics and Computation {\bf 430} (2022), 127292, \href{https://doi.org/10.1016/j.amc.2022.127292}{\texttt{https://doi.org/10.1016/j.amc.2022.127292}} 

\bibitem{OPM} M. Kostadinov, M. Krsti\' c, K. Rajkovi\' c, M.D. Petkovi\' c, \emph{Adaptive coefficients iterative method for computing matrix inverse}, Linear Algebra and its Applications, 731 (2026) 277-305.

\bibitem{XiaLi} C. Li, L. Yong, \textit{Modified BAS iteration method for absolute value equation}, AIMS Mathematics {\bf 7(1)} (2022), 606--616.
\href{https://doi.org/10.3934/math.2022038}{\texttt{https://doi.org/10.3934/math.2022038}}

\bibitem{Liu} X. Liu, H. Jin, Y. Yu, {\em Higher-order convergent iterative method for computing the generalized inverse and its application to Toeplitzz matrices}, Linear Algebra and its Applications, 439 (2013), 1635-1650.

\bibitem{Martinez} A. Marco, J. J. Martinez, \textit{Accurate computation of the Moore-Penrose inverse of strictly totally positive matrices}, Journal of Computational and Applied Mathematics {\bf 350} (2019), 299-308.  
\href{https://doi.org/10.1016/j.cam.2018.10.009}{\texttt{https://doi.org/10.1016/j.cam.2018.10.009}}

\bibitem{MatrixMarket} Matrix Market, National Institute of Standards and Technology, Gaithersburg, MD. Available online from \href{https://math.nist.gov/MatrixMarket/}{\texttt{https://math.nist.gov/MatrixMarket/}}

\bibitem{MP1} M.D. Petkovi\' c,  \textit{Generalized Schultz iterative methods for the computation of outer inverses}, Computers \& Mathematics with Applications {\bf 67 (10)} (2014), 1837–1847. \href{https://doi.org/10.1016/j.camwa.2014.03.019}{\texttt{https://doi.org/10.1016/j.camwa.2014.03.019}}

\bibitem{MP2} M.D. Petkovi\' c, M.S. Petkovi\' c, \textit{Hyper–power methods for the computation of outer inverses}, Journal of Computational and Applied Mathematics {\bf 278} (2015) 110–118. \href{https://doi.org/10.1016/j.cam.2014.09.024}{\texttt{https://doi.org/10.1016/j.cam.2014.09.024}}

\bibitem{Sch} Schulz G., \textit{Iterative Berechnung Der Reziproken Matrix}, Zeitschrift für Angewandte Mathematik und Mechanik
{\bf 13} (1933), 57-59.   \href{https://doi.org/10.1002/zamm.19330130111}{\texttt{https://doi.org/10.1002/zamm.19330130111}}

\bibitem{SMSDrazin} Y. Wei, {\it Successive matrix squaring algorithm for computing the Drazin inverse}, Applied Mathematics and Computation {\bf 108} (2000), 67--75, \href{https://doi.org/10.1016/S0096-3003(98)10118-2}{\texttt{https://doi.org/10.1016/S0096-3003(98)10118-2}}

\bibitem{Weiguo} L. Weiguo, L. Juan, Q. TianTian, {\it A family of iterative methods for computing Moore–-Penrose inverse of a matrix}, Linear Algebra and its Applications {\bf 438} (2013), 47–56, \href{https://doi.org/10.1016/j.laa.2012.08.004}{\texttt{https://doi.org/10.1016/j.laa.2012.08.004}}

\bibitem{PKR} M. D. Petkovi\'c, M. A. Krsti\'c, K. P. Rajkovi\'c, {\it Rapid generalized Schultz iterative methods for the computation of outer inverses}, Journal of Computational and Applied Mathematics {\bf 344} (2018), 572--584, \href{https://doi.org/10.1016/j.cam.2018.05.048}{\texttt{https://doi.org/10.1016/j.cam.2018.05.048}}.

\bibitem{Yang2023} K. Yang, X. Tan, Q. Qian, Z. Zhang, X. You, C. Zhang, \textit{An Improved Newton-Schulz Iterative Algorithm for Massive MIMO Detection}, 2023 International Conference on Wireless Communications and Signal Processing (WCSP), \href{10.1109/WCSP58612.2023.10404466}{\texttt{10.1109/WCSP58612.2023.10404466}}

\bibitem{ZhangKangTan} J. Zhang, H. Kang and F. Tan, \textit{Two-parameters numerical methods of the non-symmetric algebraic Riccati equation}, Journal of Computational and Applied Mathematics {\bf 378} (2020), 112933;
\href{https://doi.org/10.1016/j.cam.2020.112}{\texttt{https://doi.org/10.1016/j.cam.2020.112}}

\bibitem{Zhou2025} X. Zhou, S. Fan, M. Jaggi, \textit{HyperINF: Unleashing the HyperPower of the Schulz's Method for Data Influence Estimation}, Arxiv preprint. \href{
https://doi.org/10.48550/arXiv.2410.05090}{\texttt{
https://doi.org/10.48550/arXiv.2410.05090}}



}

\end{thebibliography}
\end{document}